\theoremstyle{plain} %--default
\newcounter{wow}
\newtheorem{theorem}    [wow]{Theorem}
\newtheorem{lemma}      [wow]{Lemma}
\newtheorem{corollary}  [wow]{Corollary}
\newtheorem{proposition}[wow]{Proposition}
\theoremstyle{definition}
\newtheorem{definition} [subsection]{Definition}
\newtheorem*{definition*}           {Definition}
\theoremstyle{remark}
\newtheorem{remark}[wow]{Remark}
\definecolor{prpl}{rgb}{0.7, 0.0, 0.7}
    \newcommand{\mysect}[1]{
  \subsection{}\hspace{-.5em}\textbf{#1.}
\vskip .5em}
\def\Sym{\operatorname{Sym}}
\def\id{\operatorname{id}}
\def\GL{\operatorname{GL}}
\def\PGL{\operatorname{PGL}}
\def\charact{\operatorname{char}}
\def\op{\operatorname{op}}
\def\tr{\operatorname{tr}}
\def\N{\operatorname{N}}
\def\End{\operatorname{End}}
\def\SL{\operatorname{SL}}
\def\PSL{\operatorname{PSL}}
\def\disc{\operatorname{disc}}
\def\mult{\operatorname{mult}}
\def\vol{\operatorname{vol}}
\def\pt{\operatorname{pt}}
\def\arcosh{\operatorname{arcosh}}
\def\P{\mathbb{P}}
\def\Q{\mathbb{Q}}
\def\Z{\mathbb{Z}}
\def\C{\mathbb{C}}
\def\R{\mathbb{R}}
\def\F{\mathbb{F}}
\def\H{\mathbb{H}}
\def\D{\mathbb{D}}
\renewcommand{\O}{\mathcal{O}}
\renewcommand{\phi}{\varphi}
\renewcommand{\bar}[1]{\overline{#1}}
\def\into{\rightarrow}
\def\inject{\hookrightarrow}
\newcommand{\quat}[2]{\left(\frac{#1}{#2}\right)}
\newcommand{\mat}[4]{\begin{pmatrix}#1&#2\\#3&#4\end{pmatrix}}
\newcommand{\CM}{\mathrm{CM}}
 \title[Frey-Mazur conjecture over low genus curves]{On the Frey-Mazur conjecture over \\low genus curves}
 \date{\today}
 \author{Benjamin Bakker}
 \address{B. Bakker:
 Courant Institute of Mathematical Sciences, New York University,
 251 Mercer St., New York, NY 10012
 }
 \email{bakker@cims.nyu.edu}
\author{Jacob Tsimerman}
\address{J. Tsimerman:
Mathematics Department, Harvard University, 1 Oxford Street, Cambridge, MA,02138}
\email{jacobt@math.harvard.edu}
\begin{document}
\begin{abstract}
The Frey--Mazur conjecture states that an elliptic curve over $\Q$ is determined up to isogeny by its $p$-torsion Galois representation for $p\geq 17$.  We study a geometric analog of this conjecture, and show that the map from isogeny classes of ``fake elliptic curves"---abelian surfaces with quaternionic multiplication---to their $p$-torsion Galois representations is one-to-one over function fields of small genus complex curves for sufficiently large $p$ relative to the genus.
\end{abstract}
\maketitle

\section{Introduction}

The Frey-Mazur conjecture, originating in \cite{mazur}, states that for a prime $p\geq17$, an elliptic curve over $\Q$ is classified up to isogeny by its $p$-torsion, viewed as a Galois representation (or equivalently, as a finite flat group scheme). Geometrically, there is a surface $Z(p)$ that parameterizes
pairs of elliptic curves $(E,E')$ together with an isomorphism $\phi$ of their $p$-torsion, and this surface is endowed with natural Hecke divisors $H_M$ parametrizing points for which $\phi$ is induced by an isogeny of degree $M$. The conjecture is equivalent to the statement that for $p\geq 17$, all rational points of $Z(p)$ lie on one of these divisors\footnote{Note that by work of Mazur, it is only necessary to consider $M\leq 163$}. Since by work of Hermann \cite{hermann} the surface $Z(p)$ is of general type for $p>11$, the Bombieri-Lang conjecture implies that there are only finitely many rational points on the complement of the union of all rational and elliptic curves in $Z(p)$. Hence, it becomes natural to first consider the Frey-Mazur conjecture over the function fields of curves of genus at most 1.

Rather than work with elliptic curves themselves, we instead work with what are often called ``fake elliptic curves": abelian surfaces with an action by a maximal order $\O_D$ in a quaternion algebra. Such abelian surfaces are also parametrized by a one-dimensional Shimura variety $X^D$, but crucially for us these curves are \emph{compact}.  There is an obvious natural analog of the Frey-Mazur conjecture in this setting as well. Our main result is:

\begin{theorem}[see Theorem \ref{bigthm}]\label{main}  For any $k>0$, there exists $N>0$ such that for any smooth quasiprojective complex curve $B$ of genus $g<k$ and any two abelian surfaces $A_1,A_2$ over $B$ with $\O_D$-actions whose $p$-torsion local systems $A_i[p]$ are isomorphic (as $\O_D$-modules), $A_1$ and $A_2$ are $\O_D$-isogenous provided $p>N$.
\end{theorem}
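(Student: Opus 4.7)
The plan is to reinterpret the hypothesis as a holomorphic map from $C$ to a certain Shimura surface and to constrain such maps via a combination of hyperbolic area bounds and an effective strong approximation argument.

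A triple $(A_1,A_2,\phi)$ with $\phi\colon A_1[p]\xrightarrow{\sim}A_2[p]$ an $\O_D$-equivariant isomorphism corresponds to a holomorphic map $f\colon C\to Z^D(p)$, where $Z^D(p)$ is the compact Shimura surface parameterizing such triples; the forgetful map $Z^D(p)\to X^D\times X^D$ is finite, and the Hecke divisors $H_M\subset Z^D(p)$ are precisely the locus where $\phi$ is induced by an $\O_D$-isogeny of degree $M$. Thus the conclusion is equivalent to showing $f(C)\subset H_M$ for some $M$. Since $Z^D(p)$ is a compact quotient of $\H\times\H$ by an arithmetic lattice, it carries a canonical K\"ahler--Einstein metric of negative Ricci curvature, and by Gauss--Bonnet the hyperbolic area of $f(C)$ is bounded above by a constant depending only on $g<k$.

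Next I would analyze the monodromy representation $\rho=(\rho_1,\rho_2)\colon\pi_1(C)\to\Gamma_D\times\Gamma_D$ obtained from $f$, where $\Gamma_D$ is the arithmetic lattice uniformizing $X^D$. The hypothesis on $p$-torsion forces the mod-$p$ reduction of $\rho$ into the graph of an inner automorphism of $\Gamma_D/\Gamma_D(p)$, i.e.\ a ``diagonal-type'' subgroup of $(\Gamma_D/\Gamma_D(p))^2$, which has size of order $p^3$ rather than $p^6$. Conversely, if $A_1$ and $A_2$ are not $\O_D$-isogenous, then---after accounting for the possibilities coming from the $\Q$-commensurator of $\Gamma_D$---the Zariski closure of $\rho(\pi_1(C))$ in $(\SL_2)^2$ cannot lie in any such graph subgroup. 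A strong approximation argument then forces the mod-$p$ image of $\rho$ to be correspondingly large, contradicting the diagonal constraint once $p$ is big enough.

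The main obstacle, and the real technical heart of the theorem, is to make the threshold $N$ depend only on $k$ and not on the individual $(C,A_1,A_2)$. The hyperbolic area bound from Gauss--Bonnet controls the total length of a generating system for $\pi_1(C)$ of size $\leq 2g\leq 2k$, and this translates into a uniform bound on the translation lengths---hence on the archimedean size---of a set of generators of $\rho(\pi_1(C))\subset \SL_2(\R)^2$. Combining this with an effective version of strong approximation (in the spirit of Nori or Matthews--Vaserstein--Weisfeiler), whose ``bad prime'' threshold depends only on the archimedean complexity of a generating set, should yield the required uniform constant $N=N(k)$ and complete the argument. Edge cases in which one or both $f_i$ are constant (isotrivial $A_i$) must be handled separately, but in those cases the existence of a $p$-torsion isomorphism for $p$ large already forces both $A_i$ to be constant and isogenous by a rigidity argument.
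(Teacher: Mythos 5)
Your overall architecture (reduce to maps $V\to Z^D(p)$, show non-Hecke maps force large mod-$p$ monodromy via Andr\'e--Deligne plus strong approximation, and play this against an area/genus bound coming from hyperbolicity of the target) is the same as the paper's, but the step you treat as routine is exactly the one that fails, and it is the technical heart of the paper. Your claim that ``by Gauss--Bonnet the hyperbolic area of $f(C)$ is bounded above by a constant depending only on $g<k$'' is not correct as stated: $Z^D(p)$ is not smooth, it has cyclic quotient singularities of order $2$ or $3$ at the images of the Heegner and anti-Heegner CM points, and the Ahlfors--Schwarz/Gauss--Bonnet comparison breaks down there (the pulled-back form blows up at points of $V$ mapping to the singular locus, so the maximum-principle argument does not apply on the compact curve). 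The correct statement, obtained either from the orbifold Gauss--Bonnet formula or, as in the paper, from Riemann--Hurwitz applied to the cover $\alpha:C\to V$ with $C\subset X^D(p)\times X^D(p)$ the lift, is an upper bound of the shape $g(C)\leq 1+(\deg\alpha)(g(V)-1)+\tfrac12\mult_{\CM}(C)$, where the extra term records how often (with multiplicity) the curve passes through the CM points. A priori this term is of the same order as the main term $C\cdot K_{X^D(p)\times X^D(p)}$ coming from the lower bound, so nothing closes. Sections 3 and 4 of the paper exist precisely to prove $\mult_{\CM}(C)=o(C\cdot K)$: one needs the repulsion statement for Heegner CM points (those not far apart lie on Hecke curves of bounded degree), the Hwang--To volume inequalities around points and around Hecke curves, and a separate estimate near \emph{conjugate} Hecke curves to handle the anti-Heegner points. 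None of this is replaceable by the bare hyperbolicity of $Z^D(p)$; your proposal has no mechanism for it.

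There is a second, independent gap in your uniformity step. Bounded genus does not bound the hyperbolic lengths (or translation lengths, or archimedean size) of a generating set of $\pi_1(V)$: for fixed genus the systole can be arbitrarily short, and generators must cross the resulting thin parts, so their lengths are unbounded. Hence the input you want to feed into an ``effective strong approximation whose threshold depends only on the archimedean complexity of the generators'' is not available, quite apart from the question of whether such an effective statement is at hand. The paper obtains uniformity differently: if the bidegree of the projected curve $C'\subset X^D\times X^D$ is bounded, then the branch/monodromy data of $C'\to X^D$ is finite, so there are only finitely many possible images of $\pi_1(C')$ in $\Gamma^D\times\Gamma^D$ up to conjugacy, and the non-effective theorem of Nori can be applied to this finite list to force surjection onto $G(\F_p)\times G(\F_p)$ for all $p$ beyond a single threshold; this is what makes the degree bound (Proposition \ref{degreegrows}) uniform in the pair $(A_1,A_2)$. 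Your observation that the mod-$p$ image is constrained to a graph subgroup of size about $p^3$ and the appeal to Andr\'e--Deligne are correct and match the paper, but as assembled your argument proves at best the non-uniform statement (for each fixed non-isogenous pair, only finitely many bad $p$), and even that only after the singular-point issue above is repaired.
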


The statement of Theorem \ref{main} is equivalent to the assertion that any map from a curve of genus $g<k$ to the modular surface $Z^D(p)$ analogous to $Z(p)$ above lies in a Hecke divisor $H$ for $p\gg 0$.  The proof of Theorem \ref{main} is substantially easier assuming the base Shimura curve $X^D$ has genus $\geq 2$ (see Corollary \ref{easygenus}), as the main difficulty is understanding rational and elliptic curves in $Z^D(p)$.  A catalog of low genus Shimura curves can be found in \cite{voight}.

Theorem \ref{main} is ostensibly about curves in $Z^D(p)$, however, it is easier to understand curves in the product $X^D(p)\times X^D(p)$, where $X^D(p)$ parameterizes such abelian surfaces $A$ rigidified by an $\O_D$-isomorphism $A[p]\cong \O_D[p]$---equivalently an element $z\in A[p]$ generating $A[p]$ over $\O_D[p]$.  The surface $Z^D(p)$ is naturally the quotient of $X^D(p)\times X^D(p)$ only remembering the composition $A_1[p]\xrightarrow{\cong}\O_D[p]\xrightarrow{\cong}A_2[p]$.

The main idea of the proof of Theorems \ref{main} runs as follows:  given a curve $B$ in $Z^D(p)$ we lift it to a curve $C$ in the surface $X^D(p)\times X^D(p)$ and estimate its genus using Riemann-Hurwitz in 2 different ways. We obtain a lower bound from the projections to the curves $X^D(p)$ simply by ignoring ramification. The lower bound only becomes useful once we know that the bidegree of non-Hecke curves has to be large.  We deduce this from a theorem of Andre and Deligne \cite{andre} which says that the image of the fundamental group of a non-Hecke curve is Zariski dense.  This argument alone is enough to conclude Theorem \ref{main} when $g(X^D)>1$.

The upper bound requires a bound on the ramification divisor of $C\rightarrow B$, which can only be supported at the singular points of $X^D(p)\times X^D(p)\rightarrow Z^D(p)$, so we look to bound the number of times $C$ can pass through this set. The singularities naturally split into two sets which, following Kani and Schanz \cite{kani}, we label the ``Heegner'' and ``anti-Heegner'' CM points. We show that with respect to the hyperbolic metric, the Heegner CM points are far away from each other, except for a set which lie on low degree Hecke curves. We then use work of Hwang and To \cite{hwangto1,hwangto2} to show that curves $C$ with high incidence along the Heegner CM points must have large volume. The absence of Hecke curves passing through the anti-Heegner CM points requires us to prove an analogous bound on the volume of the curve $C$ near the \emph{conjugate} Hecke curves.

\mysect{The elliptic curve case}
Since the writing of this preprint, the authors have proven \cite{BT2} the analog of Theorem \ref{main} for elliptic curves, namely that two elliptic curves over the function field of a complex curve $B$ with isomorphic $p$-torsion are isogenous provided $p$ is larger than a constant $N$.  The proof follows the same strategy as that outlined above, though the analysis is substantially complicated by the existence of cusps on the modular curves $X(p)$.  Furthermore, it is shown there that the constant $N$ can be taken to depend only on the gonality of $B$, which is the analog of the degree of a number field in the function field setting.  Though this preprint is largely subsumed and substantially generalized by \cite{BT2}, it has two advantages:\begin{itemize}
\item The Shimura case simply exhibits the core idea;
\item The argument of Section \ref{lowdegreesect} is not needed in \cite{BT2} but may still be of interest.
\end{itemize}
We note here that using the techniques of \cite{BT2}, the constant $N$ of Theorem \ref{main} can be likewise taken to depend only on the gonality of $B$ (\emph{cf}. Remark \ref{gonality}).  We also expect the method of this paper to work for all compact Shimura curves but do not pursue this here.
\mysect{Outline}\noindent
We now give an outline of the rest of the paper. In Section \ref{shimurasect} we recall background on quaternion algebras, Shimura modular curves and level structures.  We carefully treat the uniformization of these curves,  and classify the points with additional automorphisms (the ``Heegner" and ``anti-Heegner" CM points).  In Section \ref{heegnersect}, we prove that those Heegner CM points that are not well spread out lie on low degree Hecke curves, and in Section \ref{hwangtosect} we use this to bound the incidence of non-Hecke curves $C\subset X^D(p)\times X^D(p)$ along the CM points.  In Section \ref{lowdegreesect} we show that the bidegree of non-Hecke curves $B\subset Z^D(p)$ grows with $p$.  Section \ref{mainsect} contains the proof of Theorem \ref{main}.

\mysect{Acknowledgements}\noindent  The authors benefited from many useful conversations with Fedor Bogomolov, Johan de Jong, Michael McQuillan, Allison Miller, and Peter Sarnak. The first named author was supported by NSF fellowship DMS-1103982.

\mysect{Notation}\noindent

Throughout the paper we use the following notation regarding regarding asymptotic growth: For functions $f,g$ we write $f\gg g$ if there is a positive constant $L>0$ such that $f-Lg$ is a positive function; likewise for $\ll$. We may also sometimes write $f=O(g)$ instead of $f\ll g$.  If $f_t,g_t$ are functions depending on $t$, we write $f_t=o(g_t)$ as $t\into\infty$ to mean that for all $L>0$, there exists $N>0$ such that $g_t-Lf_t $ is positive, provided $t>N$.
\section{Shimura Modular Curves}\label{shimurasect}

We begin by briefly recalling the theory of Shimura modular curves
over $\Q$.  Our main reference is \cite[Chapter 4]{milne} for quaterion algebras and
\cite{shimura}, \cite{elkies} for Shimura curves.
\mysect{Quaternion algebras}\noindent
Let $k$ be a field of characteristic $\charact k\neq 2$.  Recall
that a quaterion algebra $D/k$ over a field $k$ is a central simple
algebra over $k$ with $\dim_k D=4$.  The trivial (or split)
quaterion algebra is $D=M_2(k)$, the algebra of 2 by 2 matrices over
$k$.  Given an extension $K/k$, we say $D$ is split over
$K$ if $D\otimes_k K\cong M_2(K)$, and we similarly define $D$ to be
split at a place $v$ of $k$ if $D\otimes k_v$ is split.  A quaternion
algebra $D$ over $\Q$ is indefinite if it is split at the infinite
place.

We can construct quaternion algebras analogously to the usual
Hamiltonian quaternions.  For $\alpha,\beta\in
k$ we define $\quat{\alpha,\beta}{k}$ to be the quaternion algebra with $k$-basis
$1,i,j,ij$ and relations
\[i^2=\alpha,\hspace{.5in} j^2=\beta,\hspace{.5in} ij=-ji\]
For
example, $\quat{1,1}{k}\cong M_2(k)$ is split;
$\quat{-1,-1}{\R}$ is the usual Hamiltonian quaternions.  $D=\quat{\alpha,\beta}{k}$ comes endowed with a canonical involution
$\bar{\,\cdot\,}:D\xrightarrow{\cong} D^{\op}$ given by
\[\bar{a+bi+cj+dij}=a-b i-c
j-d ij\]  With our
hypotheses on the characteristic, every quaternion algebra $D/k$ is
representable as $\quat{\alpha,\beta}{k}$ for some $\alpha,\beta\in k$.
 We define the reduced trace and norm to
be the maps
\[\tr:D\into k:x\mapsto x+\bar{x}\]
\[\N:D\into k:x\mapsto x\bar{x}\]

So for $D=\quat{\alpha,\beta}{k}$, we have that $\tr(a+b i+c j+d
ij)=2a$, and
\[\N(a+b i+c j+d
ij)=a^2-\alpha b^2-\beta c^2+\alpha\beta d^2\]

For example, the involution of $M_2(k)$ is
\[\bar{\mat{a}{b}{c}{d}}=\mat{d}{-b}{-c}{a}\]
and the reduced trace and norm are simply the trace and determinant, respectively.

Note that $D=\quat{\alpha,\beta}{k}$ is naturally split over $K=k(\sqrt{\alpha})$.  Indeed,
$K\cong k\oplus ki$ is a subalgebra of $D$, and the left regular
representation $L:D\into \End_K(D)$ mapping $x\in D$ to left
multiplication by $x$ becomes an isomorphism over $K$.  Using the
$K$-basis $1,j$, the representation is explicitly given by
\[L((a+b i)+(c+d i)j)=\mat{a+b
  i}{b(c-d i)}{c+d i}{a +b i}\]

If $k$ is a number field or a $p$-adic field, an order $\O$ of a quaternion algebra $D/k$ is a subring containing
the ring of integers $\O_k$ of $k$ which is finite as a module over $\O_k$.  For example, $M_2(\O_k)$ and
$\O_k[i,j]$ are orders in $M_2(k)$ and
$\quat{\alpha,\beta}{k}$, respectively.  For any order $\O$, we define
$\O_+^*$ to be the group of units of positive norm,
$\O^*_1\subset \O^*_+$ to be the norm 1 subgroup, and the discriminant $\disc\O$ to be its discriminant with respect to the reduced trace form.

\mysect{Shimura modular curves}  \noindent Throughout the remainder of the paper, let $D/\Q$ be a nonsplit indefinite quaternion algebra of
discriminant $d$, and
let $\O_D$ be a maximal order of $D$.  Note that because $D$ is
indefinite, all of its maximal orders are conjugate \cite[Theorem 14]{clark}.

For a variety $S$, an
abelian surface over $S$ with an $\O_D$-action is an abelian scheme $A/S$ of relative
dimension 2 with an injective
ring homomorphism $\iota:\O_D\inject \End_S(A)$.  Let $\mathcal{X}^D/\Q$ be
the stack of such families.  The associated coarse space $X^D/\Q$ is a
smooth proper curve, called a Shimura curve.

Shimura curves can be thought of loosely as generalized elliptic modular curves.
Indeed, one can view the elliptic modular curve $X(1)$ as constructed
via the above procedure by taking the maximal order $\O_D=M_2(\Z)$ in the split
quaternion algebra $D=M_2(\Q)$.  An abelian surface $A$ with
an $\O_D$-action is then forced to be the square of an
elliptic curve with the obvious inclusion $\O_D\inject \End(A)$.

For $N$ coprime to $d$, we define $X^D_0(N)/\Q$ to be the coarse space
associated to the stack of abelian surfaces with an
$\O_D$-action together with a $\Z/N\Z$-rank 2 (left) $\O_D$-submodule $V$ of the $N$-torsion
$A[N]$.  $X^D_0(N)$ is a smooth proper curve and admits two maps:  $\mu:X^D_0(N)\into X^D$ forgetting the torsion submodule,
and $\nu:X^D_0(N)\into X^D$ sending $A$ to $A/V$. Since $N$ is coprime to $d$, $D$ splits over $\Q_p$ for each $p|N$, and therefore $\O_D(\Z/N\Z)\cong M_2(\Z/N\Z)$. Thus there are $\prod_{p^e||N} (p^e+p^{e-1})$ such modules $V$.  By analogy
with the elliptic modular curve case, we say that $A/V$ is cyclically
isogenous to $A$.

 Again for $N$ coprime to $d$, a full level $N$ structure on an abelian surface $A$ with an $\O_D$ action is an isomorphism $A[N]\cong \O_D[N]$ of $\O_D$-modules, and two level structures are equivalent if the isomorphisms are equal up to scale  (\emph{cf.} Remark \ref{level}).  Equivalently, a full level $N$ structure is an element $v\in A[N]$ such that $\O_D[N]v=A[N]$, defined up to scaling by $(\Z/N)^*$.  The Shimura curves with full level structure $X^D(N)$ are then defined as the coarse space associated to the
 stack $\mathcal{X}^D(N)$ of abelian surfaces $A$ with an $\O_D$-action and
 endowed with full level $N$ structure.

 There is similarly an
 obvious forgetful map $\pi: X^D(N)\into X^D$, and for any $M$ coprime to $N$, the two maps
 $X^D_0(M)\into X^D$ induce a Hecke correspondence $T_M$ (where we drop the $N$ by abuse of notation):
\[\xymatrix{
&\ar[ld]T_M\ar[dd]\ar[rd]&\\
X^D(N)\ar[dd]_\pi & & X^D(N)\ar[dd]^\pi\\
&\ar[ld]_\mu X^D_0(M)\ar[rd]^\nu&\\
X^D&&X^D\\
}\]

Explicitly, points in the image $T_M\into X^D(N)\times X^D(N)$ are
pairs of cyclically isogenous abelian surfaces with an $\O_D$-action
 and full level $N$ structure such that the isogeny is of degree $M$ and
induces an isomorphism of level $N$ structures.  By the above, the degree of $\mu$ and $\nu$ is $\prod_{p^e||N} (p^e+p^{e-1})$.

\mysect{Uniformization}
Like elliptic modular curves, Shimura curves can also be represented explicitly as quotients of $\H^{\pm}=\C\backslash\R$
by discrete groups of isometries.

Define $\Gamma^D=\O_{D}^*/\pm 1$.  Since $D$ is indefinite, we may
choose an isomorphism $\phi_D: D\otimes\R\xrightarrow{\cong} M_2(\R)$, and under this
isomorphism $(D\otimes\R)^*\cong \GL_2(\R)$. Moreover, this induces an
inclusion $\Gamma^D\inject\PGL_2(\R)$ as a discrete cocompact subgroup, and in fact we have the following
\begin{lemma}\label{unieasy}
$X^D(\C)\cong\Gamma^D\backslash\H^{\pm}$.
\end{lemma}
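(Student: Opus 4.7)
The plan is to parametrize the $\mathbb{C}$-points of the moduli problem analytically and identify the resulting quotient with $\Gamma^D\backslash\H^{\pm}$. First, I would present any complex $\O_D$-abelian surface $A$ as $V/\Lambda$ where $V$ is a $2$-dimensional complex vector space and $\Lambda\subset V$ is a rank-$4$ lattice. The $\O_D$-action makes $\Lambda$ a left $\O_D$-module which has rank $1$ over $\O_D$ for rank reasons. Because $D$ is indefinite, Eichler's theorem gives class number $1$ for the maximal order $\O_D$, so $\Lambda\cong\O_D$ as left $\O_D$-modules. Hence, up to $\O_D$-equivariant isomorphism, one may take $\Lambda=\O_D$ sitting inside $V=D\otimes\R$.

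Next, I would parametrize the complex structures. A compatible complex structure $J:V\to V$ must commute with the left multiplication by $D\otimes\R$, so by the double centralizer theorem $J=R_{\tau}$ is right multiplication by some $\tau\in D\otimes\R$ with $\tau^2=-1$. Transporting via $\phi_D$, the locus $\{\tau\in M_2(\R):\tau^2=-I\}$ is exactly $\{\tr\tau=0,\ \det\tau=1\}$, a two-sheeted hyperboloid whose components are identified in the standard way with $\H$ and its conjugate via the map sending $\tau$ to its $i$-eigenline in $\C^2$; this produces the $\H^{\pm}$.

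Then I would mod out by $\O_D$-equivariant isomorphism. An isomorphism between two lattices $(\O_D,R_{\tau_1})$ and $(\O_D,R_{\tau_2})$ as left $\O_D$-modules is right multiplication by a unit $u\in\O_D^{*}$, and this intertwines the complex structures iff $\tau_2=u^{-1}\tau_1 u$. Under $\phi_D$ this conjugation action is precisely the Möbius action of $\PGL_2(\R)$ on $\H^{\pm}$, and since $\pm 1\in\O_D^{*}$ acts trivially, we obtain exactly $\Gamma^D\backslash\H^{\pm}$. Discreteness and cocompactness of $\Gamma^D\subset\PGL_2(\R)$ are standard facts about arithmetic subgroups; cocompactness follows from $D$ being a division algebra, which rules out parabolic elements.

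The main obstacle is checking that every such $(V/\O_D,J)$ is an honest abelian surface rather than just a complex torus, i.e., that it admits a polarization. The standard fix uses the canonical involution on $D$: choose a purely imaginary element $\mu\in\O_D$ (so $\bar\mu=-\mu$) with the appropriate positivity relative to $J$, and define a Riemann form by $E(x,y)=\tr_{D/\Q}(\mu x\bar{y})$. Integrality on $\O_D$ is automatic; positive-definiteness of $E(\cdot,J\cdot)$ is exactly the condition that $\tau$ lies in the chosen connected component, and this pins down the identification with $\H^{\pm}$ up to relabeling. A small additional verification is that the resulting bijection between isomorphism classes and orbits is actually a biholomorphism of complex spaces, which follows once one observes that the universal family over $\H^{\pm}$ pulls back to the universal family of $\mathcal{X}^D$.
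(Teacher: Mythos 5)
Your proof is correct, and it is the dual presentation of the same classical uniformization argument that the paper uses. The paper fixes the complex structure and varies the lattice: it sets $L_z=\phi_D(\O_D)\cdot(1,z)\subset\C^2$ for $z\in\H^{\pm}$, polarizes by $\tr(\mu x\bar{y})$, and for the inverse map invokes triviality of the class group of $\O_D$ to pick an $\O_D$-generator of the period lattice, unique up to $\O_D^{*}$, the half-plane coordinate being read off from that generator. You instead fix the lattice $\O_D\subset D\otimes\R$ (again by Eichler's class-number-one theorem) and vary the complex structure, classifying it via the double centralizer theorem as right multiplication $R_\tau$ with $\tau^2=-1$, identifying that two-component locus with $\H^{\pm}$ through the $i$-eigenline, and letting $\O_D^{*}$ act by conjugation, i.e.\ by M\"obius transformations, which visibly factors through $\Gamma^D=\O_D^{*}/\pm1$. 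The two parametrizations are equivalent (the $\O_D\otimes\R$-module map $x\mapsto\phi_D(x)(1,z)$ transports one picture to the other), and both hinge on exactly the same arithmetic inputs: class number one for the indefinite maximal order, the unit group producing $\Gamma^D$, and the quaternionic Riemann form $\tr(\mu x\bar{y})$ with $\mu$ purely imaginary. What your version buys is a conceptual explanation of why $\H^{\pm}$ appears (it is literally the space of compatible complex structures) and a transparent identification of the group action; what the paper's version buys is that well-definedness of the two mutually inverse maps $\psi$ and $\xi$ is immediate from explicit formulas. Your handling of polarizability (the sign of $\mu$ matching the component containing $\tau$) is, if anything, slightly more careful than the paper's one-line assertion, so I see no gap.
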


\begin{proof}

We can make the above isomorphism explicit. For $z\in\H^{\pm}$ set $$L_z:=\phi_D(\O_D)\cdot(1,z)\subset \C^2$$ and set
$A_z:=\C^2/L_z$. This is a complex torus with an $\O_D$-action given by $\phi_D$, and by choosing $\mu\in\O_D$ such that $\mu^2=-\rm{disc}(D)$, $A_z$ can be given the structure of an abelian surface by the Riemann form $(x(1,z),y(1,z))= \tr(\mu x\bar{y})$ for $x,y\in D\otimes\R$.
It is easy to check that acting on $z$ by $\O_D^*$ preserves the lattice $L_z$ up to a right $\O_D^*$-action. Thus we have a well defined map
$$\psi:\Gamma^D\backslash\H^{\pm}\rightarrow X^D(\C), \hspace{.25in}\psi(z)=(A_z,\phi_D)$$

Likewise, given an abelian surface $A/\C$ with an $\O_D$-action $\iota$, we can pick a uniformization $A(\C)\cong \C^2/L$ in such a way that the induced $\O_D$-action on $\C^2$ is given
by $\phi_D$. Then, as the Picard group of $\O_D$ is trivial by \cite[Theorem 15]{clark} we can pick an element $(v,w)\in L$ which generates $L$ over $\O_D$, and this element is unique up to the $\O_D^*$-action. Setting $\xi(A) = z$ gives us a well defined map from $X^D(\C)$ to $\Gamma^D\backslash\H^{\pm}$ and it is easy to check that $\xi$ and $\psi$ are inverse to each other.

\end{proof}

Note that $X^D(\C)$ can have 2 connected components. In fact, this will be the case precisely when $\O_D^*$ has an element of norm $-1$, as can easily be deduced from the above.

In uniformizing $X^D(p)(\C)$, it turns out to be convenient to use two copies of $\H^{\pm}$ instead of one. Briefly, the reason for this is that the square class of the Weil pairing of any two particular $p$-torsion elements is an invariant, and thus $X^D(p)(\C)$ has twice as many connected components as $X^D$.

Define \[\Gamma^D(p):=\ker(\Gamma^D\into (\O_D\otimes\F_p)^*/\pm 1)\]
%For $p\gg 1$ the quotient group $\Gamma^D/\Gamma^D(p)$ contains $\PSL_2(\F_p)$, and so is either $\PSL_2(\F_p)$ or $\PGL_2(\F_p)$.
Fix a non-square element $\alpha\in \F_p^*$ and an element $g_0\in \O_D\otimes\F_p$ such that $g_0^2 = \alpha$. Note that $g_0$ exists because
$\F_p^2$ embeds into $M_2(\F_p)$ which is isomorphic to $\O_D\otimes \F_p$ for $p\gg 1$. We define two maps, $\psi_1,\psi_2:\H^{\pm}\into X^D(p)$.  Both maps send $z$ to the abelian surface $(A_z,\phi_D)$ as in the proof of Lemma \ref{unieasy}, but $\psi_1$ assigns the torsion element
$\frac{z}{p}$ whereas $\psi_2$ assigns the torsion element $\phi_D(g_0)\left(\frac{z}{p}\right)$. It is easy to see that both these maps are injective and well defined on
$\Gamma^D(p)\backslash\H^{\pm}$. To set notation, we label the sources of $\psi_i$ as $\H^{\pm}_i$ for $i=1,2$.

Now, note that the monodromy group of $X^D(p)/X^D$ is $G_p:= \P(\O_D\otimes\F_p)^*$ acting naturally on the $p$-torsion element.

\begin{lemma}\label{unihard}
\begin{enumerate}
\item[(a)] If $\O_D^*$ has an element of norm $-1$ and $-1\notin(\F_p^*)^2$ then $$X^D(p)(\C)\cong \Gamma^D(p)\backslash \H^{\pm},\hspace{0.5in}\O_D^*/\Gamma^D(p)\cong G_p$$ and the action of $G_p$ is induced by the $\phi_D$ action of $\O_D^*$ on $\H^{\pm}$.

\item[(b)] Else, $$X^D(p)(\C)\cong\Gamma^D(p)\backslash\H_1^{\pm}\cup \Gamma^D(p)\backslash\H_2^{\pm}$$ where the monodromy action is given as follows:
$g_0\in G_p$ acts as $(z,w)\rightarrow (w,z)$, while $g\in(\O_D\otimes \F_p)^*_1$ acts as $(z,w)\rightarrow (g z, g_0^{-1}gg_0w)$, as induced by the $\phi_D\times \phi_D$ action of $\O_D^*$.
\end{enumerate}
\end{lemma}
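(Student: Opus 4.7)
The plan is to construct the two maps $\psi_1, \psi_2 \colon \H^\pm \to X^D(p)(\C)$ defined just before the lemma, check they descend to $\Gamma^D(p) \backslash \H^\pm$, and then analyze injectivity, surjectivity, and monodromy via strong approximation for $\SL_1(D)$ (which applies since $D$ is indefinite and thus $\SL_1(D)(\R) \cong \SL_2(\R)$ is noncompact). Descent extends the computation in the proof of Lemma \ref{unieasy}: for $\gamma \in \O_D^*$ with $\phi_D(\gamma) = \mat{a}{b}{c}{d}$, multiplication by $cz + d$ gives an $\O_D$-equivariant isomorphism $A_{\gamma z} \cong A_z$ sending the level structure of $\psi_1(\gamma z)$ to $\phi_D(\gamma)(1, z)/p$. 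For $\gamma \in \Gamma^D(p)$ we have $\gamma \equiv \pm 1 \pmod p$, so this agrees with $(1, z)/p$ up to $(\Z/p)^*$-scaling, and the same computation handles $\psi_2$ because $\pm g_0 \equiv g_0$ modulo scalars.

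For case (a), I would show $\psi_1$ is a bijection $\Gamma^D(p) \backslash \H^\pm \to X^D(p)(\C)$. For injectivity: if $\psi_1(z) = \psi_1(z')$, Lemma \ref{unieasy} gives $z' = \gamma z$ for some $\gamma \in \O_D^*$, and comparison of level structures forces $\phi_D(\gamma) \equiv \lambda I \pmod p$ with $\lambda^2 = \N(\gamma) \in \{\pm 1\}$; the hypothesis $-1 \notin (\F_p^*)^2$ rules out $\lambda^2 = -1$, so $\lambda = \pm 1$ and $\gamma \in \Gamma^D(p)$. For surjectivity: every level structure on $A_z$ has the form $\phi_D(g)(1, z)/p$ for a unique $g \in G_p$, and strong approximation for $\SL_1(D)$ together with a norm $-1$ unit in $\O_D^*$ yields the surjection $\O_D^* \to G_p$ (every norm class is hit because $-1 \notin (\F_p^*)^2$). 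Lifting $g$ to some $\gamma \in \O_D^*$ and translating $z$ by $\gamma^{-1}$ realizes the given point in the image, and the isomorphism $\O_D^* / \Gamma^D(p) \cong G_p$ with the $G_p$-action induced by $\phi_D$ falls out of the same surjection.

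For case (b), strong approximation only gives surjectivity of $\O_D^* \to \PSL_2(\F_p) \subset G_p$, the index-two subgroup of square-norm classes, so $\psi_1$ lands in the sublocus whose level structures lie in the $\PSL_2(\F_p)$-orbit of $(1, z)/p$. The element $g_0$ has non-square norm $\alpha$ and represents the nontrivial coset of $\PSL_2(\F_p)$ in $G_p$, so $\psi_2$ lands in the complementary orbit, and together the two maps cover $X^D(p)(\C)$. The monodromy formulas follow from the identity $\phi_D(g)\phi_D(g_0) = \phi_D(g_0)\phi_D(g_0^{-1} g g_0)$: for norm-one $g$, the conjugate $g_0^{-1} g g_0$ is again norm-one and lifts to $\O_D^*$ by strong approximation, acting on the second copy by a M\"obius transformation, while $g_0$ itself converts a type-1 level structure to a type-2 one and hence swaps the two copies. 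The main technical obstacle is the bookkeeping surrounding the scalar subgroup $\F_p^*$ and the norm filtration on $(\O_D \otimes \F_p)^*$; in particular, the delicate subcase where both $-1 \in (\F_p^*)^2$ and $\O_D^*$ contains a norm-$-1$ unit requires handling extra scalar reductions, which can produce further identifications between the two copies of $\H^\pm$ that must be absorbed into the interpretation of the union in (b).
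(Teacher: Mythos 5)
Your proposal follows essentially the same route as the paper: the descent computation via the automorphy factor, surjectivity through strong approximation for the norm-one units (which the paper invokes only implicitly in the phrase ``$\SL_2(\F_p)\subset(\O_D\otimes\F_p)^*$''), the coset decomposition of $G_p$ by means of $g_0$, and the conjugation identity $\phi_D(g)\phi_D(g_0)=\phi_D(g_0)\phi_D(g_0^{-1}gg_0)$ for the monodromy in (b). In fact you are more explicit than the paper on two points it merely asserts before the lemma, namely well-definedness and injectivity of $\psi_1,\psi_2$ on $\Gamma^D(p)\backslash\H^{\pm}$, and your case (a) argument (scalar congruence forces $\lambda^2=\N(\gamma)\in\{\pm1\}$, and $-1\notin(\F_p^*)^2$ kills $\lambda^2=-1$) is exactly the right way to make that assertion precise.

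One clarification on the ``delicate subcase'' you flag at the end, since it is the only place where your write-up wavers. By Eichler's norm theorem, for an indefinite $D/\Q$ and a maximal (or Eichler) order the reduced norm map $\O_D^*\to\{\pm1\}$ is surjective, so a norm $-1$ unit always exists; your subcase is therefore not exceptional but is all of case (b), i.e.\ $-1\in(\F_p^*)^2$. Moreover the extra identifications you anticipate are real, but they occur \emph{within} each copy $\H_i^{\pm}$, not between the two copies: combining a norm $-1$ unit with strong approximation for the norm-one group produces a unit $\gamma$ with $\gamma\equiv\lambda\pmod p$, $\lambda^2=-1$, which acts trivially on level structures up to scale while exchanging the upper and lower half-planes, so $\psi_i$ glues the two components of $\Gamma^D(p)\backslash\H_i^{\pm}$; on the other hand no unit can reduce into the coset $\F_p^*g_0$, since $\det(\mu g_0)=-\mu^2\alpha$ is never $\pm1$ when $-1$ is a square and $\alpha$ is not, so the two copies remain disjoint (they are distinguished by the square class of the Weil pairing, as the paper notes, and the count agrees with $X^D(p)(\C)$ having twice the components of $X^D(\C)$). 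The paper's own proof, and its pre-lemma claim that the $\psi_i$ are injective, gloss over precisely this point, so you are not behind the paper here; but to nail down (b) cleanly you should either replace $\Gamma^D(p)$ by the group of units congruent to a scalar mod $p$ (which contains it with index 2 in case (b)) or record this identification explicitly rather than leaving it ``to be absorbed into the interpretation of the union.''
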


\begin{proof}

First, we show that the union of $\psi_1$ and $\psi_2$ is surjective. If we have an abelian variety with an $\O_D$-action $(A_z,\iota_z)$ with a non-zero $p$-torsion element $v$, then $v=hz$ for some $h\in (\O_D\otimes\F_p)^*$. Since $\SL_2(\F_p)\subset (\O_D\otimes\F_p)^*$ we have $$\P(\O_D\otimes\F_p)=(\O_D\otimes\F_p)^*\cup g_0(\O_D\otimes \F_p)^*$$ where the tilde denotes reduction mod $p$. The surjectivity follows. Moreover, if $\O_D^*$ has an element of norm $-1$ and $-1\notin(\F_p^*)^2$ then we likewise see that $\psi_1$ is surjective. In this case, the rest of (a) follows easily.

For (b), we must only point out that $g\cdot g_0z = g_0\cdot g_0^{-1}gg_0 z$, and that $g_0^2$ acts as a constant on $p$-torsion by construction. The proof follows similarly to above.

\end{proof}

%\mysect{Complex Conjugation}
%\noindent We note that there is a natural anti-holomorphic functor on QM abelian varieties, which we simply label $z\rightarrow\bar{z}$. If $z=(A,\iota,\phi)$, then first write
%$A(\C) = \C^2/L$. Then define $\bar{z}=(\bar{A},\bar{\iota},\bar{\phi})$, where $$\bar{A}(\C) := \C^2/\bar{L},\bar{\iota}(x)=\bar{\iota(x)}, \bar{\phi}(z)=\phi(\bar{z}).$$
%It is an easy exercise that this is independent of all choices, and preserves the images of $\psi_1$ and $\psi_2$ in the previous section.

\mysect{Heegner and anti-Heegner CM points}
\noindent
Suppose $z\in X^D(p)$ is a point with a non-trivial stabilizer in $G_p$. Then $z$ corresponds to a pair $(A,\iota:\O_D\hookrightarrow E=\End(A))$ that has an automorphism
besides $\pm1$.

\begin{lemma}
For $z$ with non-trivial stabilizer in $G_p$, $A_z$ is isogenous to $E_i\times E_i$ or $E_{\omega}\times E_{\omega}$, where we write $E_z$ for the elliptic curve $\C/\langle 1,z\rangle$.
\end{lemma}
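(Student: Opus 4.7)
The plan is to translate the hypothesis into algebraic structure on $\End^0(A_z)$ and then apply the Albert classification of endomorphism algebras.  A nontrivial element of $\Stab_{G_p}(z)$ lifts to an automorphism $\sigma$ of the pair $(A_z,\iota)$ not equal to $\pm 1$, since $\pm 1$ is already trivial in the projective group $G_p=\P(\O_D\otimes\F_p)^*$.  Thus $\sigma\in\operatorname{Aut}(A_z)$ commutes with $\iota(\O_D)$, so lies in the centralizer of $D:=\iota(\O_D)\otimes\Q$ inside $\End^0(A_z)$ and is not $\pm 1$; in particular this centralizer strictly exceeds $\Q$.

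If $A_z$ were simple, Albert's classification forces Type II for an abelian surface admitting an indefinite $\Q$-quaternion algebra, i.e.\ $\End^0(A_z)=D$.  Then the centralizer of $D$ in $D$ is the center $Z(D)=\Q$, so $\sigma\in\Q\cap\operatorname{Aut}(A_z)=\{\pm 1\}$, a contradiction.  Thus $A_z$ is isogenous to $B_1\times B_2$ for elliptic curves $B_i$.  If $B_1\not\sim B_2$ then $\End^0(A_z)=\End^0(B_1)\times\End^0(B_2)$ is a product of (at most quadratic) fields, hence commutative and incapable of containing the noncommutative $D$.  So $A_z\sim E\times E$ and $\End^0(A_z)\cong M_2(F)$ with $F:=\End^0(E)$.

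Next I would pin down $F$.  Since $D$ is nonsplit over $\Q$, $F\neq\Q$ (a $4$-dimensional $\Q$-subalgebra of $M_2(\Q)$ must equal the split $M_2(\Q)$ itself), so $F$ is imaginary quadratic and $E$ has CM.  Because $D\otimes_\Q F$ is a simple $F$-algebra, extending the embedding $D\hookrightarrow M_2(F)$ $F$-linearly gives an injection $D\otimes F\hookrightarrow M_2(F)$ which is an isomorphism by dimension count; hence $F$ splits $D$, and the centralizer of $D$ in $M_2(F)\cong D\otimes F$ is $Z(D)\otimes F=F$.  So $\sigma$ is a unit in some order $\O\subseteq\O_F$ different from $\pm 1$, which forces $F=\Q(i)$ (units $\pm i$) or $F=\Q(\omega)$ (sixth roots of unity), as these are the only imaginary quadratic fields with extra roots of unity.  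In either case $E$ is isogenous to $E_i$ or $E_\omega$ respectively, yielding the lemma.

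The main obstacle is the deduction that $F$ splits $D$ and the accompanying double centralizer computation; everything else is essentially bookkeeping once the stabilizer hypothesis has been translated into the existence of an automorphism of $(A_z,\iota)$ beyond $\pm 1$.
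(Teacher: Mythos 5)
Your proof is correct and takes essentially the same route as the paper: both identify the extra automorphism with an element of the commutant of $D$ in $\End(A_z)\otimes\Q$, invoke the classification of endomorphism algebras of complex abelian surfaces to rule out the simple case and force $\End(A_z)\otimes\Q\cong M_2(K)$ with $K$ imaginary quadratic, and conclude from the unit groups of imaginary quadratic orders that $K=\Q(i)$ or $\Q(\omega)$, hence $A_z\sim E_i\times E_i$ or $E_\omega\times E_\omega$. The only difference is organizational (you eliminate simplicity first and then analyze the product decomposition, whereas the paper computes the commutant first), with the underlying ingredients identical.
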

\begin{proof} By assumption, there is an element $x\in E^*$ which commutes with all of $\O_D$. Now, $E\otimes\Q$ must be a central simple algebra of rank 8, and thus is a matrix algebra
over a quadratic field $K=Z(E\otimes\Q)$. Let $R=K\cap E$. Since $D$ is rank 4 over $\Q$, $D$ and $K$ generate all of $E$ over $\Q$. Thus, the commutant of $D$ in $E\otimes\Q$ is the center of $E$, and thus the commutant of $\O_D$ in $E$ is $R$. Since $R$ is a quadratic ring, it follows that the order of $x$ is either 2 or 3, and $R=\Z[i]$ or $R=\Z[\omega]$. We claim that $E\otimes\Q\cong M_2(K)$. Indeed, if $A$ were simple and had a 4-dimensional CM field acting on it, then that would be its entire endomorphism algebra.
Thus $A$ is isogenous to $E_i\times E_i$ or $E_{\omega}\times E_{\omega}$, as desired.
\end{proof}

 We let $A_i$ and $A_{\omega}$ to be fixed abelian varieties isogenous to $E_i\times E_i$ and $E_{\omega}\times E_{\omega}$ respectively, together with an $\O_D$-action.
Let $R_i,R_{\omega}$ be fixed subrings of $\O_D$ isomorphic to $\Z[i],\Z[\omega]$ respectively, assuming these exist. Then we have the more refined

\begin{lemma}\label{ns}
For $z$ with non-trivial stabilizer in $G_p$, $A_z$ is $\O_D$-isogenous to $A_i$ or $A_{\omega}$.
\end{lemma}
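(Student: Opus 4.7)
By the previous lemma, $A_z$ is isogenous to one of $E_i\times E_i$ or $E_\omega\times E_\omega$; I treat the former, the other case being parallel. The plan is to transport the $\O_D$-actions on both $A_z$ and $A_i$ to the common model $E_i\times E_i$ and apply Skolem--Noether to show the resulting $\Q$-algebra embeddings of $D$ into $M_2(\Q(i))$ are conjugate; this conjugating element will then furnish the desired $\O_D$-equivariant isogeny.

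Concretely, I would choose arbitrary (not necessarily $\O_D$-equivariant) isogenies $\phi_z: A_z \to E_i\times E_i$ and $\phi_i: A_i \to E_i\times E_i$. These identify $\End^0(A_z) \cong M_2(\Q(i)) \cong \End^0(A_i)$, and transport the $\O_D$-actions to $\Q$-algebra embeddings $\iota_z', \iota_i': D\hookrightarrow M_2(\Q(i))$. Since $D$ embeds into $M_2(\Q(i))$, the quadratic field $\Q(i)$ splits $D$, so $D\otimes_\Q\Q(i)\cong M_2(\Q(i))$. Extending $\iota_z'$ and $\iota_i'$ to $\Q(i)$-linear maps $D\otimes_\Q\Q(i)\to M_2(\Q(i))$ using the central copy of $\Q(i)\subset M_2(\Q(i))$, both extensions are nonzero homomorphisms between simple central $\Q(i)$-algebras of equal dimension, and hence are $\Q(i)$-algebra isomorphisms. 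Skolem--Noether applied to $M_2(\Q(i))$ as a central simple $\Q(i)$-algebra then yields $g\in M_2(\Q(i))^*$ with $\iota_i' = g\iota_z'g^{-1}$. Viewing $g$ as a quasi-isogeny of $E_i\times E_i$, the composition $\phi_i^{-1}\circ g\circ\phi_z: A_z\to A_i$ is a $D$-equivariant quasi-isogeny in the isogeny category, and multiplying by a sufficiently divisible positive integer (to clear the denominators in both $g$ and $\phi_i^{-1}$, for instance by replacing $\phi_i^{-1}$ with a scalar multiple of the dual isogeny $\phi_i^{\vee}$) produces an honest $\O_D$-equivariant isogeny $A_z\to A_i$.

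The main subtlety lies in the Skolem--Noether step, which requires that $\Q(i)$ splits $D$ so that the $\Q(i)$-linear extensions are genuine isomorphisms. This is however immediate from the embedding $D\hookrightarrow M_2(\Q(i))$ exhibited by $\iota_z'$, and is in essence already recorded in the proof of the previous lemma where $\End^0(A_z)$ is identified with $M_2(K)$ for $K = \Q(i)$; in particular $R_i$ automatically exists whenever the first case occurs.
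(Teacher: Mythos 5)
Your proof is correct and follows essentially the same route as the paper's: identify the endomorphism algebras through an isogeny, conjugate the two embeddings of $D$ by a Skolem--Noether argument, and clear denominators with a positive integer to turn the resulting $D$-equivariant quasi-isogeny into an honest $\O_D$-isogeny. If anything, you are more careful than the paper at the Skolem--Noether step: since $\End^0\cong M_2(K)$ is central over $K=\Q(i)$ or $\Q(\omega)$ rather than over $\Q$, your extension of scalars to $K$ (using that $K$ splits $D$) is exactly what is needed to justify the conjugacy of the two embeddings, which the paper asserts more tersely.
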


\begin{proof}

We assume $A_i$ is isogenous to $A_z$, the other case being analogous. We can consider their complex points as $A_i(\C)=\C^2/L_1$ and $A_z(\C)=\C^2/L_2$ respectively, where $L_1$ and $L_2$ are commensurable. Let $V=L_1\otimes\Q=L_2\otimes\Q$.

Let $E_1$ and $E_2$ be the endomorphism rings of $A_i$ and $A_z$ respectively. Then we can identify $E_1\otimes\Q$ with $E_2\otimes\Q$ as the complex endomorphisms of $\C^2$ that preserve $V$. Now, let $\iota_1,\iota_2$ be the two embeddings of $\O_D$ into $E$ corresponding to $A_i,A_z$ respectively. Since $E_1\otimes\Q$ is a central simple algebra, there exists an element $e$
such that $e\iota_1e^{-1}=\iota_2$.  Considering a sufficiently large integer $n$ such that $neL_1\subset L_2$, we see that the map $z\rightarrow nez$ is an $\O_D$ isogeny from
$A_i$ to $A$. This completes the proof.
\end{proof}

Now, suppose that $z,w\in X_D(p)$ have a common stabilizer  $g\in G_p$. Then by the discussion above, the centers of their rings of endomorphisms are $R=\Z[i]$ or $R=\Z[\omega]$,
and there is a lift $g'\in (\O_D\otimes\F_p)^*$ and $x_z,x_w\in R^*$ such that $g' v_z=x_zv_z, g' v_w=x_wv_w$ for $v$ the chosen $p$-torsion element. Note that $g'$ must have reduced norm $1$, and so is determined up to negation, and in particular we can ensure it has order $4$ or $3$. Once we pick such a $g'$, it determines $x_z,x_w$. Then on the complex tangent space at 0 of $A_z$, $x_z$ acts as one of two scalars: either $\{i,-i\}$ or $\{\omega,-\omega\}$.

\begin{definition}\label{conjugate} Generalizing the notation of \cite{kani}, if the eigenvalues of $x_z$ and $x_w$ acting on the tangent spaces at 0 are the same, we say that $(z,w)$ is a Heegner CM point. Else, the eigenvalues differ by conjugation and we say $(z,w)$ is an anti-Heegner CM point.
\end{definition}

We remark here that $(z,w)$ is a Heegner CM point if and only if $(\bar{z},w)$ is an anti-Heegner CM point.
\mysect{Diagonal quotient varieties}
\noindent
In this section we introduce
the main objects of study, the diagonal quotient varieties $Z^D(p)$.  The stack $\mathcal{Z}^D(p)$ is the stack of pairs of abelian surfaces $(A_1,A_2)$ with $\O_D$-actions, together with a $\O_D$-isomorphisms up to scale
$A_1[p]\xrightarrow{\cong}A_2[p]$.  $\mathcal{Z}^D(p)$ is the
stack quotient $[G\backslash\mathcal{X}^D(p)\times\mathcal{X}^D(p)]$, where $G=\Gamma^D/\Gamma^D(p)$ acts
diagonally.  We let $Z^D(p)$ be the coarse space associated to
$\mathcal{Z}^D(p)$.  Note that $Z^D(p)$ is simply the scheme quotient
$G\backslash X^D(p)\times X^D(p)$, again with $G$ acting diagonally.  The
variety $Z^D(p)$ is therefore a proper projective normal scheme defined over $\Q$ with cyclic quotient singularities of order 2 or 3. The
diagonal quotient surface obtained as above from elliptic modular
curves has been studied in detail by \cite{hermann} and \cite{kani}.

 \begin{remark}\label{level} Recall that our definition of a full level structure is only up to scale, and thus the $\O_D$-isomorphisms parameterized by $\mathcal{Z}^D(p)$ are up to scale. For the problem we are considering, it would be more natural for the notion of isomorphism of level structures to coincide with that of $\O_D$-modules, but we prefer our approach as it avoids keeping track of the Weil pairing and notationally distinguishing different connected components of the Shimura curves. We note that there is a forgetful map from the unscaled level structure to the one we are considering, so in actuality we are proving a slightly stronger theorem.
 \end{remark}

The variety $Z^D(p)$ comes equipped with Hecke curves $H_M$ for any
$M$ coprime to $p$.  $H_M$ is the image of the Hecke correspondence $T_M\into
X^D(p)\times X^D(p)$ from Section 2.2 under the quotient map and parametrizes pairs of
abelian surfaces $(A_1,A_2)$ with an isomorphism up to scale
$A_1[p]\xrightarrow{\cong}A_2[p]$ induced by a cyclic isogeny $A_1\into
A_2$ of degree $M$. 
\section{Geometry of the Heegner CM points}\label{heegnersect}
\mysect{Preliminaries}\noindent
Throughout this section, the complex points of our modular curves $X$ will be equipped with canonical metrics of constant sectional curvature $-1$ inherited from $\H^{\pm}$.  Henceforth we will typically blur the notational distinction between $X$ and $X(\C)$.  For $x\in X$ we let $B(x,r)$ be the set of all points in $X$ within distance $r$ of $x$.

Recall that the injectivity radius $\rho_X(x)$ of $X$ at a point $x\in X$ is half the length of the shortest closed geodesic through $x$.  Equivalently, it is the radius $r$ of the largest isometrically embedded hyperbolic ball $B(x,r)\subset X$.  The injectivity radius $\rho_X$ is then the infimum of $\rho_X(x)$ over all $x\in X$, or equivalently half the length of the shortest closed geodesic in $X$.  It was first observed by Buser-Sarnak in \cite{BuSa} that the injectivity radii of Shimura curves are large. For the convenience of the reader, we recall the proof:

\begin{lemma}\label{injrad}$\rho_{X^D(p)}\geq 2\log p+O(1)$.
\end{lemma}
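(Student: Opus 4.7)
The plan is to bound the length of the shortest closed geodesic in $X^D(p)$ from below and then divide by two. A closed geodesic corresponds to a conjugacy class of hyperbolic elements $\gamma \in \Gamma^D(p)$, and its length equals the translation length on $\H$ of the image $\phi_D(\tilde\gamma) \in \mathrm{SL}_2(\R)$ of a lift $\tilde\gamma \in \O_D^*$ of reduced norm $1$. Explicitly this length is $2\arcosh(|\tr \tilde\gamma|/2)$, so I need to show $|\tr\tilde\gamma| \geq p^2 + O(1)$ for every nontrivial $\gamma \in \Gamma^D(p)$; since $\arcosh(x) = \log(2x)+O(1/x^2)$ this will give translation length $\geq 4\log p + O(1)$ and hence injectivity radius $\geq 2\log p + O(1)$.

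Before estimating the trace I would dispose of the non-hyperbolic elements. Since $D$ is nonsplit, $\Gamma^D$ is cocompact, so no parabolic elements occur. The torsion of $\O_D^*$ consists of units whose reduced traces and norms are bounded (they satisfy a finite list of quadratic equations from orders of roots of unity $2, 3, 4, 6$), so for $p$ larger than an absolute constant depending only on $\O_D$, no nontrivial torsion element of $\Gamma^D$ can reduce to $\pm 1 \bmod p$. Thus every nontrivial element of $\Gamma^D(p)$ is hyperbolic.

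The core computation is now a congruence argument. Write a lift of a nontrivial $\gamma \in \Gamma^D(p)$ as $\tilde\gamma = \epsilon + p\alpha$ with $\epsilon \in \{\pm 1\}$ and $\alpha \in \O_D$. Units have reduced norm $\pm 1$; expanding
\[
\N(\tilde\gamma) = 1 + p\epsilon\,\tr(\alpha) + p^2\,\N(\alpha) \in \{\pm 1\}
\]
and reducing mod $p$ rules out $\N(\tilde\gamma) = -1$ for $p>2$. So $\N(\tilde\gamma)=1$ and $\tr(\alpha) = -p\epsilon\,\N(\alpha)$. Since $\tr(\alpha) \in \Z$, this forces $p \mid \tr(\alpha)$, and substituting back gives $\tr(\tilde\gamma) = 2\epsilon + p^2 k$ for some $k \in \Z$. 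If $k=0$ then $\tr(\alpha)=\N(\alpha)=0$, and because $D$ is a division algebra the norm form is anisotropic, so $\alpha=0$ and $\gamma = 1$, contradicting our assumption. Hence $k\neq 0$ and $|\tr(\tilde\gamma)| \geq p^2 - 2$, which gives the desired estimate on the translation length.

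The only subtlety is the passage between $\Gamma^D = \O_D^*/\pm 1$ and its lifts, where the sign $\epsilon$ must be tracked, and the verification that the reduced norm of a unit in the division-algebra order is actually $\pm 1$ rather than arbitrary; both are standard but deserve a sentence each. No substantial geometric difficulty arises: the hyperbolic-trace relation is classical and cocompactness eliminates the cuspidal headache that will appear in the elliptic modular version of this argument.
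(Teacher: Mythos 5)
Your proof is correct and follows essentially the same route as the paper: the norm-$1$ congruence $\tilde\gamma\equiv\pm1\pmod p$ forces $\tr(\tilde\gamma)\equiv\pm2\pmod{p^2}$, hence $|\tr(\tilde\gamma)|\geq p^2-2$ for nontrivial elements, which converts via the trace--translation-length relation into $\rho_{X^D(p)}\geq 2\log p+O(1)$. The only cosmetic difference is that you rule out the degenerate cases ($k=0$, elliptic/parabolic elements) by anisotropy of the reduced norm form on the division algebra, whereas the paper invokes cocompactness of $\Gamma^D$; both are fine, and your write-up is if anything more careful than the paper's.
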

\begin{proof}

A closed geodesic through $x\in X^D(p)$ lifts to the unique geodesic arc between two lifts $z,\gamma z\in\H^{\pm}$ for some non-identity $\gamma\in \Gamma^D(p)$.  Note that because $\Gamma^D(p)$ is cocompact, every element of $\Gamma^D(p)$ is semisimple, so $d(z,\gamma z)=d(Az,(\tr \gamma)Az)$, where $A\in \SL_2\R$ is the diagonalizing matrix. In particular, using the formula for distance in the upper half-plane, this means
\begin{align*}
\min_{z}d(z,\gamma z)&=\min_z d(z,az)\notag\\
&=\min_z \arcosh\left(1+\frac{(a-1)^2|z|^2}{2a(\Im z)^2}\right)\notag\\
&\geq \arcosh\left(\tr(\gamma^2)/2\right)
\end{align*}
Again because $\Gamma^D$ is cocompact, the only element with trace 2 is the identity, and thus the minimal value of $|\tr \gamma|$ for $1\neq \gamma\in \Gamma^D(p)$ is $2+p^2$ as
$\tr(\gamma) \cong 1\mod{p^2}$. Thus, $\tr(\gamma^2)/2=\tr(\gamma)^2/2 -1\geq p^4/2$, and the result follows.
\end{proof}
\mysect{Repulsion of Heegner CM points}\noindent
The technical heart of this section is the following
\begin{proposition}\label{repulsion}
For each $r>0$, there exists $d>0$ such that for all sufficiently large $p$, if $(x,y),(x',y')$ are distinct CM Heegner points in $X^D(p)\times X^D(p)$ with
the same projections to $X^D\times X^D$ and $B(x,r)\cap B(x',r)\neq \varnothing$ and $B(y,r)\cap B(y',r)\neq \varnothing$, then one of $(x,y),(x',y')$ lies on some Hecke divisor $T_k$ with $k<d$.
\end{proposition}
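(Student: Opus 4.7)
The plan is to lift the configuration to $\H^\pm$ and use the anisotropy of the reduced norm on the non-split order $\O_D$ in tandem with the Heegner CM hypothesis to upgrade certain mod-$p$ congruences to exact algebraic identities for $p$ large. From these identities emerges an element of $\O_D$ of reduced norm bounded only in terms of $r$, which realizes a low-degree Hecke divisor through $(x,y)$ or $(x',y')$.

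First, by Lemma~\ref{injrad} the injectivity radius of $X^D(p)$ exceeds $2r$ for $p$ large, so the closeness conditions give isometric lifts $\tilde x, \tilde y, \tilde x', \tilde y' \in \H^\pm$, and the common projection to $X^D\times X^D$ yields $\gamma_1,\gamma_2\in\Gamma^D$ with $\tilde x'=\gamma_1\tilde x$, $\tilde y'=\gamma_2\tilde y$. Finiteness of the CM locus of $X^D$ together with cocompactness of $\Gamma^D$ confines $\gamma_1,\gamma_2$ to a finite subset $S_r\subset\Gamma^D$ depending only on $r$. The common stabilizer hypothesis then produces primitive CM generators $t_{\tilde x},t_{\tilde y}\in\O_D^*$ fixing $\tilde x,\tilde y$ with $t_{\tilde x}\equiv \pm t_{\tilde y}\pmod{p\O_D}$, and analogous $t_{\tilde x'},t_{\tilde y'}$. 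Because $D$ is non-split over $\Q$, the reduced norm is anisotropic and integer-valued on $\O_D$, so every nonzero element of $p\O_D$ has absolute reduced norm at least $p^2$; but $t_{\tilde x}\mp t_{\tilde y}$ has reduced norm bounded by an absolute constant (roots of unity). Hence for $p$ large these congruences become the exact equalities $t_{\tilde x}=\pm t_{\tilde y}$ and $t_{\tilde x'}=\pm t_{\tilde y'}$. Applying the same norm-bound trick to the commutator $[\gamma_2^{-1}\gamma_1, t_{\tilde x}]\in\O_D$ — which lies in $p\O_D$ after combining these equalities with the conjugation relations $\Stab(\tilde x')=\gamma_1\Stab(\tilde x)\gamma_1^{-1}$, $\Stab(\tilde y')=\gamma_2\Stab(\tilde y)\gamma_2^{-1}$ and the compatible Heegner choice of primitive roots — forces the commutator to vanish identically for $p$ large. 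Hence $\gamma_2^{-1}\gamma_1$ centralizes $t_{\tilde x}$ in $D$, lies in the CM subfield $K=\Q(t_{\tilde x})$, and in fact in $R^*=\O_D^*\cap K$.

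Finally, the equality $t_{\tilde x}=\pm t_{\tilde y}$ forces $\tilde y\in\{\tilde x,\bar{\tilde x}\}$, so Skolem--Noether supplies an intertwining element $\alpha\in D^\times$ conjugating the CM embedding at $\tilde x$ to the one at $\tilde y$, uniquely determined up to right multiplication by $K^\times$. The finiteness of $S_r$ together with the constraint $\gamma_2^{-1}\gamma_1\in R^*$ pins the coset $\alpha K^\times$ to a bounded region of $D$, so one can select an integral representative $\alpha\in\O_D$ of reduced norm at most a function $d(r)$ depending only on $r$. The Heegner eigenvalue match then arranges this $\alpha$ to induce the correct isomorphism of level-$p$ structures, placing $(x,y)$ on $T_k$ for some $k\leq d(r)$. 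I expect this last step to be the main obstacle: one must use the Heegner convention to resolve the sign in $t_{\tilde x}=\pm t_{\tilde y}$ and identify the $K^\times$-coset representative that actually realizes level compatibility, so that the uniform bound on the reduced norm of $\alpha$ translates into a uniform bound on $k$ independent of $p$.
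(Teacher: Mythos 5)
Your argument breaks down at the step where you upgrade the congruence $t_{\tilde x}\equiv\pm t_{\tilde y}\pmod{p\O_D}$ to the exact equality $t_{\tilde x}=\pm t_{\tilde y}$. The norm trick (anisotropy of $\N$ over $\Q$, so nonzero elements of $p\O_D$ have $|\N|\geq p^2$) is sound only when applied to a difference of elements of \emph{bounded} height, and $t_{\tilde y}$ is not bounded: nothing in the hypotheses constrains the distance between $x$ and $y$ (only $x$ to $x'$ and $y$ to $y'$), so after normalizing $\tilde x$ into a finite set you have $\tilde y=\gamma\tilde x$ and $t_{\tilde y}=\gamma t_{\tilde x}\gamma^{-1}$ with $\gamma\in\O_D^*$ arbitrary, and $\N(t_{\tilde x}\mp t_{\tilde y})=2\mp\tr(t_{\tilde x}t_{\tilde y}^{-1})$ grows without bound with $\gamma$. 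Indeed your conclusion cannot be true: the exact equality would force $\tilde y\in\{\tilde x,\bar{\tilde x}\}$, i.e.\ every Heegner CM point would lie over the diagonal of $X^D\times X^D$, whereas the common-stabilizer condition is only a mod-$p$ normalization condition on $\gamma$ with roughly $p$ solutions in $G_p$, most of which are nowhere near the actual normalizer of $\Q(t_{\tilde x})^\times$ in $D^\times$. The later commutator step and the final Skolem--Noether/coset-selection paragraph (which you flag as the main obstacle, but which is not the fatal point) all rest on these false equalities, so the argument collapses before reaching them.

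The paper's proof handles exactly this difficulty differently: it keeps the unbounded element relating the lifts of $x$ and $y$ (write $w=gz$) as an unknown, and extracts from the two Heegner conditions---at $(x,y)$ and at $(x',y')$---two mod-$p$ \emph{linear} relations $gt\equiv tg$ and $h_zgh_wt\equiv th_zgh_w$ on $g$, whose coefficients $t,h_z,h_w$ lie in finite sets depending only on $r$ (here is where the closeness hypothesis enters, bounding $h_z,h_w$, together with a preliminary reduction via a bounded isogeny to the case where $x,y$ have the same image in $X^D$). It then argues by linear algebra: if the second relation is redundant over $\Q$, then $h_z,h_w$ normalize the centralizer $H$ of $t$, hence lie in $H$, hence stabilize $z$, contradicting the distinctness of the two CM points; otherwise the two relations cut out a line, defined over $\Q$ with bounded coefficients, whose integral generator $g_1\in\O_D$ has bounded norm and is proportional to $g$ mod $p$---and proportionality mod $p$ is exactly what is needed (level structures being taken up to scale) to conclude $(x,y)\in T_{\N(g_1)}$ with $\N(g_1)$ bounded in terms of $r$. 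If you want to salvage your approach, you would need a mechanism of this kind that characterizes $g$ projectively mod $p$ by data of bounded height, rather than trying to force exact identities between the CM units themselves.
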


\begin{proof}

We first reduce to the case where $x,y$ project to the same point in $X^D$ (and hence so do $x',y'$). By Lemma \ref{ns} we can find an isogeny of bounded degree $B$ between the images of $x,y$ in $X^D$. Since the Hecke correspondences are locally geodesic and the pullback of $T_M$ along $T_B$ is a union of $T_k$ for $k<MB$, by pulling back along $T_B$ in the first co-ordinate we can assume that $x$ and $y$ project to the same point in $X^D$, at the cost of scaling $d$ by some bounded amount $B$.

Since $(x,y)$, $(x',y')$ lie in the same component of $X^D(p)$, there must be $i,j\in\{1,2\}$ and lifts $(z,w)(z',w') \in \H^{\pm}_i\times \H^{\pm}_j$ such that $(z,w)$ is in the $\O^*_{D,1}\times \O^*_{D,1}$ orbit of $(z',w')$. Replacing $(x,y)$ by $(g_0x,g_0y)$ if necessary, we can assume that $i=1$. Moreover, we can pick these lifts so that
\[d((z,w),(z',w'))=d((x,y),(x',y'))<4r\]
and by acting diagonally by $G_p$ we can pick $z$ from a finite fixed set of points $S$ independently of $p$. Set $w=gz$, $g\in G_p$.

Let $t\in \O^*_{D,1}$ be a stabilizer of $z$. Since $w$ must also be stabilized by $t$, $gtg^{-1}$ is either $t$ or $t^{-1}$ in $G_p$.  This is true in $(\O_D\otimes\F_p)^*$ up to a sign \emph{a priori}, but by comparing the traces $gtg^{-1}$ must in fact be $t$ or $t^{-1}$ in $\O_D\otimes\F_p)^*$, for any lift of $g$.  Since $(z,w)$ is a Heegner CM point by assumption, it follows that $gt=tg$.

Next, fix the set $S_r\subset\O_{D,1}^*$ of elements $\gamma$ such that $d(\gamma z,z)<r$ . Then there must exist elements $h_z,h_w\in S_r$ such that
$h_zz=z',w'=gh_wz=gh_wh_z^{-1}z'$. Hence, as in the above we must have that $gh_wh_z^{-1}$ commutes with $h_zth_z^{-1}$, or equivalently $h_zgh_w$ commutes with $t$.

The two relations $gt=tg$ and $h_zgh_wt=th_zgh_w$ can be interpreted as a set of linear equations in the coefficients of $g$, where we view $g$ as a $2\times 2$ matrix. The relation $gt=tg$ defines the field generated by $g$, so it has a 2-dimensional set of solutions. Thus, either the 2 relations define a line, or the second relation is redundant. Note that the relation is redundant over $\Q$ if and only if it is redundant over all sufficiently large finite fields.

If the second relation is redundant over $\Q$, setting $H$ to be the centralizer of $t$ in $D^*$, we must have $h_zHh_w=H$, which implies $$h_zHh_z^{-1}=(h_zHh_w)(h_zHh_w)^{-1}=HH^{-1}=H.$$

Likewise, $h_wHh_w^{-1} = H$. Now, we claim that the elements of the normalizer of $H$ in $D^*$ which have positive norm consist exactly of $H$. To prove this, note that its enough to check it after tensoring with $\R$, in which case $D$ become $M_2(\R)$ and $H$ becomes an embedded $\C^*$  (unique up to conjugation). Thus, $h_z,h_w\in H$.
Finally, note that since $H\cap \O_D$ is isomorphic to either $\Z[i]$ or $\Z[\omega]$ we must have $h_z,h_w$ stabilizers of $z$, contradicting the assumption that our Heegner CM points were distinct.

Thus, the two relations must not be redundant, and we end up with a single projective solution $g$ which we can take to be in $\O_D$. Thus $z$ and $w$ lie on $T_M$ where $M=\textrm{N}(g)$.
Setting $d$ to be bigger then all (finitely many) $M$ arising in this way gives the result.
\end{proof}

\section{CM incidence estimates}\label{hwangtosect}\noindent
For any hyperbolic curve $X$ and any curve $C\subset X\times X$, work of Hwang and To shows that the multiplicity of $C$ at a point $x\in X\times X$ is bounded in terms of the volume of $C$ in a geodesic ball centered at $x$, and similarly its intersection with a totally geodesic curve $H\subset X\times X$ is bounded by the volume of $C$ in a geodesic tubular neighborhood of $H$.
Note that the volume of a curve can be interpreted as the degree of the restriction of the canonical divisor $K_{X\times X} $ to $C$, and therefore as the intersection number $C\cdot K_{X\times X}$. This allows one to deduce algebro-geometric results from the hyperbolic ``spread-outedness" of $X$.

\begin{theorem}\label{HT}  For any curve $C\subset X\times X$, we have
\begin{enumerate}
\item[(a)]\cite[Theorem 1]{hwangto1}  For any point $x\in X$, let $B_r=B(x,r)$ for $r<\rho_X(x)$.  Then
\[\vol(C\cap B)\geq 4\pi\sinh^2\left(\frac{r}{2}\right)\mult_x(C)\]
\item[(b)]\cite[Theorem 1]{hwangto2}  Let $\Delta \subset X\times X$ and $W_r=\{(z,w)\in X\times X|d(z,w)<r\}$ for $r<\rho_X$.  Then
\[\vol(C\cap W_r)\geq 8\pi\sinh^2\left(\frac{r}{4}\right)(C\cdot\Delta)\]
\end{enumerate}
\end{theorem}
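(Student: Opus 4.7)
The plan is to extract both bounds from a Lelong--Jensen-type monotonicity inequality applied to an explicit K\"ahler potential on the universal cover.

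Because $r < \rho_X(x)$ in (a) and $r < \rho_X$ in (b), the neighborhood $B_r$ (resp.\ $W_r$) lifts isometrically to a domain $\tilde U \subset \H^{\pm}\times\H^{\pm}$, and the relevant part of $C$ lifts to a holomorphic curve $\tilde C \subset \tilde U$. The central step is to construct a function $\phi$ on $\tilde U$ which is a potential for the product hyperbolic K\"ahler form ($dd^c \phi = \omega$), vanishes on $\partial \tilde U$, and has a logarithmic pole with prescribed Lelong number along the ``center''---the lift $\tilde x$ in (a), or the lifted diagonal $\tilde \Delta$ in (b). In the ball case the radial symmetry reduces this to an ODE, whose hyperbolic solution is essentially $\log\tanh^2(d(z,\tilde x)/2)$, yielding the factor $4\pi\sinh^2(r/2)$; a parallel construction in tube (b) uses a function of $d(z,\tilde\Delta)$ that gives $8\pi\sinh^2(r/4)$.

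With $\phi$ in hand, the volume equals $\int_{\tilde C}\omega = \int_{\tilde C} dd^c\phi$, and the Poincar\'e--Lelong formula applied to $\tilde C$---using $\phi|_{\partial \tilde U}=0$ to kill boundary terms---converts the right-hand side into a lower bound of the form $c(r)\cdot m$, where $m = \mult_{\tilde x}(\tilde C)$ in (a) and $m = (\tilde C \cdot \tilde \Delta)$ in (b), and $c(r)$ is the asserted $\sinh^2$ constant (encoded by the Lelong number of $\phi$ and the boundary value normalization).

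The main technical obstacle is the construction of the K\"ahler potential in case (b). Getting $dd^c\phi$ to equal the \emph{full} product K\"ahler form on the tubular neighborhood of $\Delta$---not merely a positive $(1,1)$-form dominated by it---together with a clean logarithmic pole along $\tilde \Delta$, requires careful use of the totally geodesic nature of the diagonal and the Kähler identities on $\H^{\pm}\times\H^{\pm}$. The appearance of $\sinh^2(r/4)$ in place of $\sinh^2(r/2)$ reflects the fact that the distance-to-diagonal function in $\H^{\pm}\times\H^{\pm}$ is essentially half the hyperbolic distance in a single $\H^{\pm}$-factor, and extracting the sharp constant requires a delicate boundary computation; this is exactly the content of Hwang and To's work in \cite{hwangto1,hwangto2}.
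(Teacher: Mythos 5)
Your overall strategy---plurisubharmonic potentials with logarithmic poles, Lelong numbers, and a Stokes argument---is indeed the Hwang--To method, which is what the paper relies on (the theorem is quoted from \cite{hwangto1,hwangto2}; the paper only sketches part (b) after Corollary \ref{heckeHT}). But as written your plan has three concrete problems. First, in case (b) the tube $W_r$ does \emph{not} lift isometrically to a domain in $\H^{\pm}\times\H^{\pm}$: it deformation retracts onto the diagonal $\Delta\cong X$, which is not simply connected, so there is no such embedded lift $\tilde U$. The correct maneuver is the reverse of yours: one constructs functions on $\D\times\D$ that are invariant under the \emph{diagonal} action of the full isometry group (hence of $\Gamma$) and descends them to $X\times X$; the hypothesis $r<\rho_X$ is what guarantees the descended tube has no extra self-overlaps, not that it lifts.

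Second, your ``main technical obstacle''---forcing $dd^c\phi$ to equal the full product K\"ahler form on the tube, with zero boundary values and a prescribed log pole---is both overdetermined and not what the argument needs; insisting on it points you away from the proof rather than toward it. What Hwang--To actually use is a globally defined invariant potential $F$ with only the one-sided bound $0\leq\omega_F\leq\omega_{std}$ (this is what gives $\vol(C\cap W_r)\geq\int_{C\cap W_r}\omega_F$), together with singular modifications $f_\epsilon$ that are psh off the diagonal, carry a log pole along it with Lelong number tending to $8\sinh^2(r/4)$, and---crucially---\emph{coincide with $F$} outside $B(\Delta,r)$. That coincidence is what makes the Stokes step legitimate: the boundary terms involve $d^c$ of the potentials, so your appeal to $\phi|_{\partial\tilde U}=0$ does not kill them; one needs the difference $F-f_\epsilon$ to be compactly supported in the tube. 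Finally, the passage from the pole to the intersection number is not a ``Poincar\'e--Lelong with boundary normalization'' but the Lelong number product inequality $\nu([C]\wedge\omega_{f_\epsilon},\xi)\geq\nu([C],\xi)\,\nu(f_\epsilon,\xi)$, and the sharp constants $4\pi\sinh^2(r/2)$ and $8\pi\sinh^2(r/4)$ arise as the largest Lelong numbers compatible with the constraint $\omega_{f_\epsilon}\geq 0$ off the center and agreement with $F$ at radius $r$, not from an exact-potential boundary computation. So the skeleton of your write-up is recognizably the right one, but the specific steps you flag as routine (the isometric lift in (b), the equality $dd^c\phi=\omega$, the boundary-term cancellation) are exactly the places where the argument, as you state it, would fail.
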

Recall from Section 2.2 that for any $m$ there are two measure preserving maps $\mu,\nu:T_m\into X^D(p)$ in the sense that push-forward of multisets preserves volume and pull-back of multi-sets multiplies volume by the degree.  The maps $X^D(p)\times T_m\into X^D(p)\times X^D(p)$ given by $\phi=\id\times\mu$ and $\psi=\id\times\nu$ then have the same property.  Letting $W^m_r=\psi_*\phi^*W_r$, we can generalize part (b) of Theorem \ref{HT} to Hecke curves:
\begin{corollary}\label{heckeHT}  For any Hecke curve $T_m\subset X^D(p)\times X^D(p)$, and $r<\rho_{X^D(p)}$
\[\vol(C\cap W^m_r)\geq 8\pi\sinh^2\left(\frac{r}{4}\right)(C\cdot T_m)\]
\end{corollary}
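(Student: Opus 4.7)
The plan is to lift the desired bound to an auxiliary product variety where $T_m$ plays the role of the diagonal, and then apply Theorem \ref{HT}(b) there directly. I would set $Y = X^D(p)\times T_m$, equipping $T_m$ with the unique hyperbolic metric for which $\mu$ (equivalently $\nu$) is a local isometry, and $Y$ with the product metric. Under this normalization both $\phi = \id\times\mu$ and $\psi = \id\times\nu$ become local isometries onto $X^D(p)\times X^D(p)$, of degrees $\deg\mu$ and $\deg\nu$ respectively.

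The key geometric observation is that the graph $\Delta_Y := \{(x,t)\in Y : \mu(t)=x\}$ is a totally geodesic curve in $Y$, mapped isomorphically onto $T_m$ by $\psi$, and that the preimage $\phi^{-1}(W_r) = \{(x,t) : d(x,\mu(t))<r\}$ is exactly the open $r$-tubular neighborhood of $\Delta_Y$ whenever $r<\rho_{X^D(p)}\leq\rho_Y$. Because $\phi$ is a local isometry carrying $\Delta_Y$ to the usual diagonal of $X^D(p)\times X^D(p)$, the proof of Theorem \ref{HT}(b) — which is entirely local along the totally geodesic curve — transfers verbatim to $Y$ and gives
\[\vol_Y\!\left(\psi^*C\cap\phi^{-1}(W_r)\right)\;\geq\; 8\pi\sinh^2\!\left(\frac{r}{4}\right)\bigl(\psi^*C\cdot\Delta_Y\bigr)_Y.\]

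It then remains to push this estimate down to $X^D(p)\times X^D(p)$. The intersection number is handled by the projection formula: since $\psi$ restricts to an isomorphism $\Delta_Y\to T_m$, we get $(\psi^*C\cdot\Delta_Y)_Y = (C\cdot\psi_*\Delta_Y) = (C\cdot T_m)$. For the volume, I would use that the multiplicity of $W^m_r = \psi_*\phi^*W_r$ at a point $(x,y)$ equals $\#\{t\in\nu^{-1}(y) : d(x,\mu(t))<r\}$, which is precisely the number of $\psi$-preimages of $(x,y)$ lying in $\phi^{-1}(W_r)$; since $\psi$ is a local isometry, this identification yields $\vol_Y(\psi^*C\cap\phi^{-1}(W_r)) = \vol(C\cap W^m_r)$ as multisets, completing the argument.

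The main technical hurdle is confirming that Theorem \ref{HT}(b) truly applies on $Y$ rather than on an honest self-product $X\times X$ — namely that $\phi^{-1}(W_r)$ is a genuine normal tubular neighborhood of $\Delta_Y$ out to radius $r$, and that the Hwang–To calibration argument depends only on the local geometry transverse to a totally geodesic curve. Both concerns dissolve because $\phi$ is a local isometry identifying $(\Delta_Y\subset Y)$ with (a neighborhood of) the diagonal in $X^D(p)\times X^D(p)$, and the injectivity bound $\rho_Y\geq\rho_{X^D(p)}$ holds since any closed geodesic in $T_m$ projects under $\mu$ to a closed geodesic in $X^D(p)$ of length at most as large.
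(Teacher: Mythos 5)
Your argument is correct in substance, but it routes through a genuinely different execution point than the paper. The paper never leaves the self-product: it uses that $\phi$ and $\psi$ preserve volumes of multisets to write $\vol(C\cap W^m_r)=\vol(\phi_*\psi^*C\cap W_r)$, applies Theorem \ref{HT}(b) \emph{as stated} to the cycle $\phi_*\psi^*C$ on $X^D(p)\times X^D(p)$, and then converts the intersection number by the projection formula, $(\phi_*\psi^*C\cdot\Delta)=(C\cdot\psi_*\phi^*\Delta)=(C\cdot T_m)$. You instead keep the curve upstairs on $Y=X^D(p)\times T_m$ and apply a \emph{transferred} version of Theorem \ref{HT}(b) to the graph $\Delta_Y$ of $\mu$; your bookkeeping identities ($\vol_Y(\psi^*C\cap\phi^{-1}(W_r))=\vol(C\cap W^m_r)$ and $(\psi^*C\cdot\Delta_Y)_Y=(C\cdot T_m)$) are exactly the same projection-formula/volume-preservation manipulations, just performed in the opposite order. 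What the paper's order buys is that Hwang--To is used as a black box; your order requires re-justifying their construction in a setting that is not literally $X\times X$, and this is the only place your write-up leans on an assertion ("transfers verbatim") rather than a proof. The justification does go through: the preimage of $\Delta_Y$ in $\D\times\D$ is a union of diagonal $\PSL_2(\R)$-translates of $\Delta_\D$ whose $r$-neighborhoods (in the sense $d(x,\mu(t))<r$) are pairwise disjoint once $r<\rho_{X^D(p)}$, since $\mu$ is étale for large $p$ and the deck transformations involved lie in $\Gamma^D(p)$; so the invariant potentials $F$, $f_\epsilon$ descend with a logarithmic pole along $\Delta_Y$ and the Stokes/Lelong-number argument is unchanged. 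Two small cautions: $\phi^{-1}(W_r)=\{(x,t):d(x,\mu(t))<r\}$ is the correct analogue of $W_r$ but is \emph{not} the metric $r$-tube of $\Delta_Y$ in the product metric (nor is $W_r$ itself the metric tube of $\Delta$), so phrase the descent in terms of the set $\{d(x,\mu(t))<r\}$ rather than "tubular neighborhood"; and the disjointness you actually need is the translate-separation above, for which your observation $\rho_Y\geq\rho_{X^D(p)}$ is the right input but not by itself the full statement. With those caveats your proposal is a valid, slightly heavier, alternative to the paper's three-line proof.
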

\begin{proof}
\begin{align*}
\vol(C\cap W^m_r)&=\vol(\phi_*\psi^*C \cap W_r)\\
&\geq \sinh^2\left(\frac{r}{4}\right)(\phi_{*}\psi^*C\cdot\Delta)\\
&= \sinh^2\left(\frac{r}{4}\right) (C\cdot\psi_{*}\phi^*\Delta)\\
&= \sinh^2\left(\frac{r}{4}\right) (C\cdot T_m)
\end{align*}
\end{proof}

Both statements in Theorem \ref{HT} are optimal in the sense that the bound is realized by a union of translates of the image of the graph of $-z:\D\into\D$.  For the convenience of the reader, we summarize the proof of part (b) above. Recall the following

\begin{definition*} For $\phi(z)$ a plurisubharmonic function on a neighborhood of a point $x$ in some complex manifold $M$, the Lelong number of $\phi$ at $x$ is
$$\nu(\phi,x):=\liminf_{z\rightarrow x} \frac{\phi(z)}{\log|z - x|}.$$
\end{definition*}
For example, if $V\subset M$ is a divisor cut out locally by $f$, and $\phi(z)=\log|f(z)|$, then $\nu(\phi,x)=\mult_x(f)$.

  Hwang and To define a plurisubharmonic function $F:\D\times\D\into\R$ that is diagonally-invariant under the full isometry group $\PSL_2\R$ such that $0\leq \omega_F=i\partial\bar\partial F \leq \omega_{std}$, as well as diagonally-invariant functions $f_\epsilon:\D\times\D\into \R$ that
\begin{enumerate}
\item are plurisubharmonic off the diagonal $\Delta_\D\subset\D\times\D$;
\item agree with $F$ outside of $B(\Delta_\D,r)$;
\item have a logarithmic pole along the diagonal, and for any $\xi\in \Delta_\D$, $$\liminf_{\epsilon\into 0}\nu(f_\epsilon,\xi)=8\sinh^2(r/4)$$
\end{enumerate}

As $f_{\epsilon}$ and $F$ descend to functions on $X\times X$, it then follows that for any curve $C\subset X\times X$,
\[\vol(C\cap B(\Delta,r))\geq \int_{C\cap B(\Delta,r)}\omega_F=\int_{C\cap B(\Delta,r)}\omega_{f_\epsilon}\geq \pi\sum_{\xi\in C\cap \Delta}\nu(f_\epsilon,\xi)\mult_\xi C\]
where we've used the diagonal invariance to descend the forms to $X\times X$.  The equality follows from Stoke's theorem and the second inequality from the fact that, for $[C]$ denoting the current of integration along $C$,
\[\nu([C]\wedge \omega_{f_\epsilon},\xi)\geq \nu([C],\xi)\nu(f_\epsilon,\xi)\]
(\emph{cf} \cite{hwangto1}, Proposition 2.2.1(a)).

We will need a result comparing the volume within a radius $R$ to that within a smaller radius $r$ of the \emph{conjugate} diagonal in order to handle the anti-Heegner CM points.  For a curve $X$, its conjugate $\bar X$ is the same curve with the negated complex structure, and the pointwise diagonal $\bar \Delta\subset X\times \bar X$ is called the conjugate diagonal.

\begin{proposition}\label{htad}  For $X$ a compact hyperbolic complex curve, any complex curve $C\subset X\times \bar X$  that is not the conjugate diagonal, and any $\rho_X>R>r>0$,
\[\vol(C\cap B(\bar\Delta,R))\geq \frac{\sinh(R/2)}{\sinh(r/2)}\vol(C\cap B(\bar\Delta,r))\]

Furthermore, the bound is optimal in the following sense: suppose $X$ is isomorphic to $\bar{X}$ via a map $z\rightarrow\bar{z}$. (For instance, $X$ is defined over $\R$). Then the graph $(z,\bar{z})$ achieves the bound.
\end{proposition}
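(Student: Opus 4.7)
Following the strategy for part (b) of Theorem \ref{HT}, my plan is to construct a plurisubharmonic potential with logarithmic pole along $\bar\Delta$ whose restriction to the complex curve $C$ is subharmonic, and then extract a differential inequality on $V(s) := \vol(C\cap B(\bar\Delta,s))$ via a coarea argument. The key geometric input is that $\bar\Delta \subset X \times \bar X$ is totally geodesic---being the fixed locus of the antiholomorphic involution swapping factors (using $\bar X = X$ as smooth manifolds)---although unlike the diagonal in $X\times X$ it is totally real rather than complex. In particular the distance function $\rho(P) := d(P,\bar\Delta)$ is smooth on a deleted tubular neighborhood of $\bar\Delta$.

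First I would set $\phi(P) := \log\sinh(\rho(P)/2)$ on $(X\times \bar X)\setminus\bar\Delta$ and verify that $\phi$ is plurisubharmonic, using a normal-coordinate calculation along $\bar\Delta$: the totally geodesic property makes the Hessian of $\rho^2$ block-diagonal in the tangential and normal directions of $\bar\Delta$, and the specific profile $F(t) = \log\sinh(t/2)$ is chosen so that the normal block contributes a positive $(1,1)$-form while the tangential block contributes a positive multiple of the Kähler form. Second, truncating $\phi$ at level $\log\sinh(R/2)$ produces a globally plurisubharmonic function $\phi_R := \max(\phi,\log\sinh(R/2))$ agreeing with $\phi$ inside $B(\bar\Delta,R)$ and constant outside. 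Since $C$ is a complex curve, $\phi_R|_C$ is subharmonic, and the positive $(1,1)$-current $dd^c\phi_R|_C$ is supported in $\overline{B(\bar\Delta,R)}\cap C$.

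Third, comparing $\int_{[C]} dd^c \phi_R$ on the annular shells $B(\bar\Delta,R)\setminus B(\bar\Delta,s)$ via the coarea formula reduces the claim to the pointwise differential inequality
\[ \frac{d}{ds}\log V(s) \;\geq\; \tfrac{1}{2}\coth(s/2), \]
valid wherever $V$ is differentiable and positive, which integrates to the desired ratio $\sinh(R/2)/\sinh(r/2)$. Optimality for the graph $\{(z,\bar z)\}$---a holomorphic curve in $X\times \bar X$ when $X$ admits a real structure $\sigma$---is a direct calculation: the curve meets $\bar\Delta$ exactly along the real locus $X(\R) = \mathrm{Fix}(\sigma)$, the restriction of $\rho$ to the graph is (up to a constant) the distance to $X(\R)$ in $X$, and the tube-area formula $2L\sinh(t)$ around a closed geodesic of length $L$ in a hyperbolic surface shows equality is attained.

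The main obstacle will be establishing the plurisubharmonicity of $\phi$. In the standard Hwang--To setting the diagonal is a complex submanifold and $\log\sinh^2(d(\cdot,\Delta)/2)$ has an immediately tractable Poincaré--Lelong decomposition; here $\bar\Delta$ is only totally real, so the complex structure $J$ of $X\times\bar X$ mixes tangent and normal directions of $\bar\Delta$ nontrivially, and one must verify by direct computation that the totally geodesic property combined with the specific $\sinh$-profile produces a nonnegative complex Hessian. The rest of the argument is then a fairly mechanical adaptation of the Stokes' theorem--coarea calculus developed for part (b) of Theorem \ref{HT}.
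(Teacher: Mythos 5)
Your plan fails at its central step, and not because the computation is hard: $\log\sinh(\rho/2)$ with $\rho=\operatorname{dist}(\cdot,\bar\Delta)$ is \emph{not} plurisubharmonic near $\bar\Delta$, and no choice of radial profile can fix this. The conjugate diagonal is totally real of maximal dimension: locally it is the model $\{w=\bar z\}\subset\C^2$, which a $\C$-linear change of coordinates identifies with $\R^2\subset\C^2$. A maximal totally real submanifold is not pluripolar, so any psh function tending to $-\infty$ along it would be identically $-\infty$; more concretely, the complex Hessian of $\log\operatorname{dist}(\cdot,\R^2)$ has a \emph{negative} eigenvalue of size comparable to $\operatorname{dist}^{-2}$ in the complex direction spanned by the real normal vector, and this blow-up dominates any bounded correction coming from curvature or from $\bar\Delta$ being totally geodesic, so the same failure occurs for your $\phi$ on $X\times\bar X$. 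This is exactly why the proposition is a pure volume-ratio statement with no multiplicity or Lelong-number term, in contrast to part (b) of Theorem \ref{HT}, where the diagonal is a complex curve and log poles are available. A secondary slip: $\max(\phi,\log\sinh(R/2))$ is constant \emph{inside} the tube and equals $\phi$ outside, the opposite of what you assert, and the truncation you actually want (equal to $\phi$ inside, constant outside) is a minimum of psh functions, which is not psh.

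The paper's proof avoids any pole: working $\SL_2(\R)$-equivariantly on $\D\times\D$ with $\psi(z,w)=\tanh^2(d_\D(z,\bar w)/2)$, it takes $F=f(\psi)$ where $f$ is \emph{constant} on the inner tube (so $\omega_F\equiv 0$ there), has an explicitly chosen slope $h'(s)$, $s=-\log(1-\psi)$, on the annulus so that $0\leq\omega_F\leq\bigl(1-\sinh(r/2)/\sinh(R/2)\bigr)^{-1}\omega_{std}/2$, and has slope one in $s$ outside radius $R$ so that $\omega_F=\omega_{std}/2$ there; a Stokes comparison of $\int_C\omega_F$ with $\tfrac12\vol(C\cap B(\bar\Delta,R))$ then gives precisely the ratio $\sinh(R/2)/\sinh(r/2)$. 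Your target inequality $\frac{d}{ds}\log V(s)\geq\tfrac12\coth(s/2)$ has the right shape and does integrate to the claim, but the psh-pole mechanism you propose for deriving it is unavailable; to salvage your outline you would have to replace the pole by a compactly modified potential of the above kind. Note also that the tube in the paper is measured by the cross-distance $d_\D(z,\bar w)$ rather than the Riemannian distance to $\bar\Delta$ in the product metric (the two differ by a factor of $\sqrt 2$), so your normalization would shift the constants. Your optimality sketch via the collar-area formula around the real locus is fine in spirit.
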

\begin{proof}The proof is very similar to Proposition \ref{htd}.  Suppose $X=\Gamma\backslash{\mathbb{D}}$, so that $\bar X=\bar{\Gamma}\backslash{\mathbb{D}}$, and consider the function $\psi$ on $\mathbb{D}\times\mathbb{D}$ given by
\[\psi(z,w)=\tanh^2 (d_{\mathbb{D}}(z,\bar w)/2)=\left| \frac{\bar w-z}{1-zw}\right|^2.\]
$\psi$ is invariant under the diagonal action of $\SL_2\R$.  For any function $f:\R\into\R$, we compute that at $(0,w)$, the potential $F(z,w)=f(\psi)$ yields a form
\begin{align*}
\omega_F&=i\partial\bar\partial F\\
&=f'(\psi)\mat{(1-|w|^2)^2}{w^2}{\bar w^2}{1}+f''(\psi)\mat{|w|^2(1-|w|^2)^2}{-w^2(1-|w|^2)}{-\bar w^2(1-|w|^2)}{|w|^2}
\end{align*}
Taking $s(\psi)=-\log (1-\psi)$ and $S(z,w)=s(\psi(z,w))$, for instance, we have by direct computation
\[\omega_{S}=\mat{1}{0}{0}{(1-|w|^2)^{-2}}=\frac{\omega_{std}}{2}\]
 where $\omega_{std}$ is the standard form on ${\mathbb{D}}\times{\mathbb{D}}$.  Let $C= s(\tanh^2(R/2))$, $c=s(\tanh^2(r/2))$, and define a continuous function $f:[0,1]\into\R$ on the interval $\left[\tanh^2(r/2),\tanh^2(R/2)\right]$ by $f(\psi)=h(s(\psi))$, where
\[h'(s)=\frac{1-\sqrt{\frac{e^c-1}{e^s-1}}}{1-\sqrt{\frac{e^c-1}{e^C-1}}}\]
Take $f$ to be constant on $[0,\tanh^2(r/2)]$, and linear of slope 1 on $[\tanh^2(R/2),1]$.  One can easily compute that the resulting $\omega_F$ is positive, and dominated by
\[(h'(s)+2(1-e^{-s})h''(s))\frac{\omega_{std}}{2}= \left(1-\sqrt{\frac{e^c-1}{e^C-1}}\right)^{-1}\frac{\omega_{std}}{2}\]
on (the interior of) $B(\bar\Delta_\mathbb{D},R)-B(\bar\Delta_\mathbb{D},r)$.  Further we have that
$$\omega_F|_{B(\bar\Delta_\mathbb{D},r)}=0\textrm{\;\;\;and\;\;\;}\omega_F|_{B(\bar\Delta_\mathbb{D},\rho_X)-B(\bar\Delta_{\mathbb{D}},R)}=\frac{\omega_{std}}{2}$$  Smoothing $F$ out by the same trick as in the proof of Proposition \ref{htd} and descending these forms down to $X\times\bar X$, we have that
\begin{align*}
\vol(C\cap B(\bar\Delta,R))&=\int_{C\cap B(\bar\Delta,R)}\omega_{std}\\
&=2\int_{C\cap B(\bar\Delta,R)}\omega_F\\
&\leq \left(1-\sqrt{\frac{e^c-1}{e^C-1}}\right)^{-1}\left(\vol(C\cap B(\bar\Delta,R))-\vol(C\cap B(\bar\Delta,r))\right)
\end{align*}
yielding the statement, as $e^c-1=\sinh^2(r/2)$, and likewise for $C$ and $R$.
\end{proof}

Let $\CM^+$ be the set of Heegner CM points on $X^D(p)\times X^D(p)$, and $\CM^-$ the set of anti-Heegner CM points.  To get a upper bound for the genus of $C$ we will need an estimate for the total multiplicities $\mult_{\CM^{\pm}}(C)=\sum_{x\in\{\CM^{\pm}\}}\mult_x(C)$.
\begin{proposition} \label{ramification} For any non-Hecke curve $C\subset X^D(p)\times X^D(p)$, we have \[\mult_{\CM^+} (C)=o(C\cdot K_{X\times X})\] as $p\rightarrow\infty$.
\end{proposition}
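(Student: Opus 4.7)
The plan is to combine the Hwang--To multiplicity estimate (Theorem~\ref{HT}(a)) with the repulsion statement of Proposition~\ref{repulsion} to bound the multiplicities at Heegner CM points lying off low-degree Hecke divisors, and to use Corollary~\ref{heckeHT} to handle the remaining contribution. Fix a radius $r=r(p)\to\infty$ with $r<\rho_{X^D(p)}$ (possible by Lemma~\ref{injrad}), and let $d=d(r)$ be the constant produced by Proposition~\ref{repulsion}; an inspection of its proof shows that $d(r)$ grows at most exponentially in $r$, since it arises as $\N(g)$ for a $g\in\O_D$ whose entries are polynomial in the entries of elements of $S_r$, which have size $O(e^{r/2})$. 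Partition $\CM^+$ into $E=\CM^+\cap\bigcup_{k<d}T_k$ and its complement $F$. Because $X^D$ carries only finitely many CM points, the Heegner CM points also fall into a bounded number $N=O_D(1)$ of classes under projection to $X^D\times X^D$.

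For any two distinct $(x,y),(x',y')\in F$ lying in a common projection class, the contrapositive of Proposition~\ref{repulsion} forces $\max(d(x,x'),d(y,y'))\geq 2r$, and hence $d((x,y),(x',y'))\geq 2r$ in the product metric. Consequently the product balls $B((x,y),r)\subset X^D(p)\times X^D(p)$, as $(x,y)$ ranges over a fixed projection class in $F$, are pairwise disjoint; the injectivity radius of the product is again at least $\rho_{X^D(p)}$, so Theorem~\ref{HT}(a) applies. Summing the resulting bound over these disjoint balls in each class, and then over the $N$ classes, yields
\[\sum_{(x,y)\in F}\mult_{(x,y)}(C)\;\leq\;\frac{N\,\vol(C)}{4\pi\sinh^2(r/2)}.\]

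For $E$, each $(x,y)\in E$ lies on some $T_k$ with $k<d$, and since $C$ is non-Hecke we have $\mult_{(x,y)}(C)\leq (C\cdot T_k)_{(x,y)}$, hence $\sum_{(x,y)\in E}\mult_{(x,y)}(C)\leq\sum_{k<d}C\cdot T_k$. Applying Corollary~\ref{heckeHT} with a second radius $R<\rho_{X^D(p)}$ bounds $C\cdot T_k\leq\vol(C)/(8\pi\sinh^2(R/4))$, and therefore
\[\sum_{(x,y)\in E}\mult_{(x,y)}(C)\;\leq\;\frac{d(r)\,\vol(C)}{8\pi\sinh^2(R/4)}.\]
Combining the two estimates, the first bracketed term is $o(\vol(C))$ as $r\to\infty$, and the second is $o(\vol(C))$ provided $\sinh^2(R/4)\gg d(r)$. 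Since $d(r)$ is only exponential in $r$ while Lemma~\ref{injrad} affords $R$ up to $\sim 2\log p$, the choice $r=c\log p$ for a sufficiently small constant $c>0$ together with $R=(2-\epsilon)\log p$ makes both terms $o(\vol(C))$. As $\vol(C)\asymp C\cdot K_{X^D(p)\times X^D(p)}$, this yields the claim. The principal quantitative obstacle is the tension between taking $r$ large enough that $\sinh^2(r/2)\to\infty$ yet keeping $d(r)$ small enough that Corollary~\ref{heckeHT} at radius $R\leq 2\log p$ still dominates it; Lemma~\ref{injrad}'s logarithmic injectivity-radius estimate provides exactly enough room.
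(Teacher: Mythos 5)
Your overall decomposition coincides with the paper's: split $\CM^+$ into the points lying on $\bigcup_{k<d}T_k$ and the rest, control the latter via Theorem \ref{HT}(a) together with the repulsion statement, and control the former via Corollary \ref{heckeHT}. The divergence is in how the limits are taken, and that is where there is a genuine gap. You let $r=r(p)\to\infty$ and invoke Proposition \ref{repulsion} at this growing radius. But that proposition is quantified with $r$ fixed first: for each $r$ there exist $d(r)$ and a threshold such that the conclusion holds \emph{for all sufficiently large $p$}, and the threshold depends on $r$ (it enters, for instance, when the redundancy of the second linear relation over $\Q$ is transferred to $\F_p$, since the coefficients involve elements of $S_r$, whose size grows with $r$). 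To run your argument with $r=c\log p$ you need effectivity in \emph{two} places: a bound on $d(r)$, which you address by a quick inspection of the proof (plausible, but not something the stated proposition gives you), and a bound of the shape $p_0(r)\leq e^{O(r)}$ on the threshold, which you do not address at all; without it the appeal to Proposition \ref{repulsion} at a $p$-dependent radius is not licensed. A second, more minor slip: Corollary \ref{heckeHT} gives $C\cdot T_k\leq \vol(C\cap W^k_R)/(8\pi\sinh^2(R/4))$, and $\vol(C\cap W^k_R)$ is bounded by $\deg T_k\cdot\vol(C)$, not by $\vol(C)$; the missing factor is polynomial in $d$ and would be absorbed by your choice of constants, but as written that inequality is unjustified.

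The larger point is that none of this quantitative bookkeeping is needed, and the paper's proof is arranged precisely to avoid it. There the radius $R$ is held \emph{fixed}; then $d=d(R)$ is a constant independent of $p$, the Hecke contribution is $\ll d^3\sinh^{-2}(\,\cdot\,)\vol(C)$ with the $\sinh$ evaluated at a radius comparable to the injectivity radius $2\log p+O(1)$, hence is $o(\vol(C))$ as $p\to\infty$ for that fixed $R$, and one obtains $\mult_{\CM^+}(C)\ll (C\cdot K_{X\times X})(\sinh^{-2}(R/2)+o(1))$. Since this holds for every fixed $R$, the claimed $o(C\cdot K_{X\times X})$ statement follows by letting $R\to\infty$ only after $p$, using Proposition \ref{repulsion} exactly as stated. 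Your argument can be repaired either by adopting this order of limits (in which case your careful partition into projection classes and the disjointness argument go through verbatim) or by actually proving the effective version of Proposition \ref{repulsion}, threshold included; as written, the invocation with $r$ depending on $p$ is the gap.
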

\begin{proof}  Fix some $R>0$.  For $d>0$, partition $\CM^+$ into two sets \[T:=\CM^+\cap\cup_{m<d}T_m\] and $S:=\CM^+\backslash T$. By Proposition \ref{repulsion}, if $d$ is large enough in relation to $R$,  the balls $B(z,R)$ are disjoint as $z$ varies over $S$. By Theorem \ref{HT}, Lemma \ref{injrad}  and Corollary \ref{heckeHT} we then have that
\begin{align*}
\mult_{\CM^{+}}(C)&= \sum_{x\in S}\mult_x(C) + \sum_{x\in T}\mult_x(T)\\
&\ll \sinh^{-2}(R/2)\vol\left(C\cap \cup_{x\in S} B(x,r)\right) + \sum_{m<d} (C.T_d)\\
&\ll \sinh^{-2}(R/2)\vol(C) + \sinh^{-2}(p/2)\sum_{m<d}\deg T_m\vol(C)\\
&\ll (C.K_{X\times X})(\sinh^{-2}(R)+d^3\sinh^{-2}(p/2))\\
\end{align*}

As $p\rightarrow\infty$, we see that $\mult_{\CM^{+}}(C)\ll (C\cdot K_{X\times X})(\sinh^{-2}(R)+o(1))$. Since $R$ can be chosen arbitrarily large, the claim follows.

\end{proof}

\begin{proposition} \label{ramificationminus} For any non-Hecke curve $C\subset X^D(p)\times X^D(p)$, we have \[\mult_{\CM^-} (C)=o(C\cdot K_{X\times X})\] as $p\rightarrow\infty$. 
\end{proposition}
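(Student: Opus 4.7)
My plan is to mirror the proof of Proposition \ref{ramification}, substituting anti-Heegner CM points for Heegner CM points and \emph{conjugate} Hecke curves $\bar T_m$ for ordinary Hecke curves $T_m$. Two ingredients need to be adapted: the repulsion statement (Proposition \ref{repulsion}) and the intersection bound (Corollary \ref{heckeHT}). The first adapts essentially for free via conjugation, but the second requires genuine new input, since conjugate Hecke curves fail to be complex-analytic subvarieties of $X^D(p)\times X^D(p)$ and hence Theorem \ref{HT}(b) no longer applies; this is precisely the gap that Proposition \ref{htad} is designed to fill.

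The anti-Heegner analog of Proposition \ref{repulsion} follows from the remark after Definition \ref{conjugate}: under conjugation in the first coordinate, $(z,w)\mapsto (\bar z,w)$, anti-Heegner CM points on $X^D(p)\times X^D(p)$ correspond to Heegner CM points on $\overline{X^D(p)}\times X^D(p)$, and conjugate Hecke curves correspond to ordinary Hecke curves on the conjugated product. The linear algebra in the proof of Proposition \ref{repulsion} then goes through unchanged to yield: for each $r>0$ there is a $d>0$ such that any two distinct close anti-Heegner CM points lie on some $\bar T_m$ with $m<d$.

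For the intersection bound, I will derive an analog of Corollary \ref{heckeHT} using Proposition \ref{htad} in place of Theorem \ref{HT}(b). Carrying out the same push-pull construction as in Section 4, but relative to the conjugate diagonal $\bar\Delta$ rather than the ordinary diagonal, produces tubes $\bar W^m_r$ around each conjugate Hecke curve $\bar T_m$. Proposition \ref{htad} then provides the monotonicity
\[\vol(C\cap \bar W^m_R) \geq \frac{\sinh(R/2)}{\sinh(r/2)}\vol(C\cap \bar W^m_r),\]
and combining with a standard Lelong-type lower bound at small scale yields a bound of the form $C\cdot \bar T_m \ll \deg(\bar T_m)\sinh^{-1}(\rho_{X^D(p)}/2)\vol(C)$ upon taking $R\approx \rho_{X^D(p)}$; by Lemma \ref{injrad} this decays like $p^{-1}\vol(C)$.

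With these two ingredients the argument then runs verbatim as in Proposition \ref{ramification}: partition $\CM^-$ into a well-separated set $S$ whose contribution is bounded by $\sinh^{-2}(R/2)\vol(C)$ via Theorem \ref{HT}(a), together with a set $T$ lying on small-degree conjugate Hecke curves whose contribution is $\sum_{m<d}(C\cdot \bar T_m)\ll d^{3}p^{-1}\vol(C)$. Since $\vol(C)\ll C\cdot K_{X\times X}$, letting $p\to\infty$ and then $R\to\infty$ gives $\mult_{\CM^-}(C)=o(C\cdot K_{X\times X})$. The principal obstacle is deriving the conjugate-Hecke analog of Corollary \ref{heckeHT}: Proposition \ref{htad} yields only a monotonicity of tube volumes rather than an intersection-theoretic bound, so the crux is to exploit the large injectivity radius of $X^D(p)$ from Lemma \ref{injrad} to convert that monotonicity into an effective intersection bound that actually decays with $p$.
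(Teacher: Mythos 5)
Your proposal follows essentially the same route as the paper's proof: partition $\CM^-$ into points on low-degree conjugate Hecke curves $\bar T_m$ (via the conjugated form of Proposition \ref{repulsion}) and a well-separated remainder handled by Theorem \ref{HT}(a), then control the $\bar T_m$ contribution by push-pull to the conjugate diagonal, combining a small-scale application of Theorem \ref{HT}(a) with the monotonicity of Proposition \ref{htad} up to radius $\approx\rho_{X^D(p)}\geq 2\log p$ to obtain decay of order $\sinh^{-1}(\log p)$. The only details to make explicit, as the paper does, are that ``$C\cdot\bar T_m$'' must be interpreted as a sum of multiplicities (since $\bar T_m$ is not complex-analytic there is no honest intersection number) and that the radius-$R$ balls centered at anti-Heegner points on the conjugate diagonal have only $O_R(1)$ overlaps, so the local Lelong-type bounds sum to the tube volume.
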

\begin{proof}

Fix some $R>0$.  We would like to perform the same trick for the anti-Heegner CM points, and for appropriately chosen $d$ we again partition the points of $\CM^-$ into
 \[T=\CM^-\cap\cup_{m<d} \bar T_m\hspace{.2in}
\mathrm{and}\hspace{.2in} S=\CM^--  T\]
where $\bar\cdot$ denotes complex conjugation on the second factor.  It will still be the case that balls of radius $R$ around points of $S$ (with $d$ chosen sufficiently large) are disjoint, but the multiplicity of a curve $C$ along $\bar T_m$ does not quite make sense, so we adjust the argument slightly:
%
%Fix some $R>0$. For $d>0$, partition $\CM^-$ into two sets \[T:=\CM^-\cap\cup_{m<d}\bar T_m\hspace{.2in}
%\mathrm{and}\hspace{.2in}S:=\CM^+\backslash T\]  It follows by Proposition \ref{repulsion}, if $d$ is large enough in relation to $R$,  the balls $B(z,R)$ are disjoint as $z$ varies over $S$. By Theorem \ref{htad} and lemma \ref{injrad} we then have that

\begin{align}
\mult_{\CM^{-}}(C)&= \sum_{x\in S}\mult_x(C) + \sum_{x\in T}\mult_x(C)\notag\\
&\ll \sinh^{-2}(R/2)\vol\left(C\cap \cup_{x\in S} B(x,r)\right) + \sum_{m<d} \mult_{\CM^-\cap \bar T_m} (C)\notag\\
&\ll \sinh^{-2}(R/2)\vol(C) + \sum_{m<d}\deg T_m\cdot \mult_{\CM^-\cap\bar \Delta}(T_m^* C)\label{last}
\end{align}

Since $R$ can be taken arbitrarily large, it suffices to show that for a \emph{fixed} $m$, $$\mult_{\CM^{-}\cap \bar \Delta}(T_m^* C)=o(\vol(C))$$
Because the injectivity radius is $2\log p$, there are $O_R(1)$ many overlaps of balls of radius $R$ centered at anti-Heegner $\CM$ points on the conjugate diagonal, so by Theorem \ref{HT} we have
\begin{align*}
\mult_{\CM^-\cap \bar\Delta}(T_m^*C)&\ll \sinh^{-2}(R/2)\sum_{x\in \CM^-\cap\bar \Delta}\vol(T_m^*\cap B(x,R))\\
&\ll O_R(1)\cdot \sinh^{-2}(R/2)\vol(T_m^*C\cap B(\Delta,R))\\
&\ll O_R(1)\cdot \sinh^{-1}(\log p)\vol(T_m^* C)
\end{align*}
where we've used Proposition \ref{htad} (and Lemma \ref{injrad}) in the last step.  Since $\vol(T_m^*C)=\deg T_m\vol(C)$, the Proposition is proven.
\end{proof}
Therefore, writing 
\[\mult_{\CM}C=\mult_{\CM^+}C+\mult_{\CM^-}C\]
we have 
\begin{corollary}\label{ramo}For any non-Hecke curve $C\subset X^D(p)\times X^D(p)$, we have \[\mult_{\CM} (C)=o(C\cdot K_{X\times X})\] as $p\rightarrow\infty$. 
\end{corollary}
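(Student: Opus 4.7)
The plan is to deduce Corollary \ref{ramo} immediately from Propositions \ref{ramification} and \ref{ramificationminus}. By construction, every CM point $(z,w) \in X^D(p)\times X^D(p)$ falls into exactly one of the two classes defined in Definition \ref{conjugate}: either the eigenvalues of the common stabilizer on the tangent spaces at $0$ of $A_z$ and $A_w$ agree (the Heegner case, contributing to $\CM^+$) or they differ by complex conjugation (the anti-Heegner case, contributing to $\CM^-$). Consequently $\CM = \CM^+ \sqcup \CM^-$, and the multiplicity of $C$ along $\CM$ splits additively as
\[
\mult_{\CM}(C) \;=\; \mult_{\CM^+}(C) \;+\; \mult_{\CM^-}(C),
\]
exactly as in the displayed identity preceding the statement.

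The first of these two terms is $o(C\cdot K_{X^D(p)\times X^D(p)})$ as $p\to\infty$ by Proposition \ref{ramification}, and the second is $o(C\cdot K_{X^D(p)\times X^D(p)})$ by Proposition \ref{ramificationminus}, in both cases under the hypothesis that $C$ is not a Hecke curve, which is the hypothesis of the corollary.

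It remains only to observe that a sum of two functions that are each $o(g)$ is itself $o(g)$ in the asymptotic sense fixed in the Notation paragraph of the introduction: given any constant $C_0 > 0$, apply Propositions \ref{ramification} and \ref{ramificationminus} with the constant $2C_0$ to find $N_1, N_2$ such that for $p > \max(N_1, N_2)$ we have both $\mult_{\CM^+}(C) < (C\cdot K)/(2C_0)$ and $\mult_{\CM^-}(C) < (C\cdot K)/(2C_0)$; summing yields $\mult_{\CM}(C) < (C\cdot K)/C_0$. Since $C_0$ was arbitrary, this is precisely the definition of $\mult_{\CM}(C) = o(C\cdot K_{X^D(p)\times X^D(p)})$. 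Since all genuine mathematical content has already been expended in the two preceding propositions, there is no substantive obstacle here; the step is purely bookkeeping.
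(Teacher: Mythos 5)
Your proposal is correct and is exactly the paper's route: the paper presents Corollary \ref{ramo} as an immediate consequence of Propositions \ref{ramification} and \ref{ramificationminus} via the decomposition $\mult_{\CM}(C)=\mult_{\CM^+}(C)+\mult_{\CM^-}(C)$, with no further argument. Your added bookkeeping verifying that a sum of two $o(C\cdot K)$ quantities is $o(C\cdot K)$ under the paper's asymptotic conventions is fine, just spelled out more explicitly than the paper bothers to.
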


\section{Low degree curves}\label{lowdegreesect}
Let $F=X^D\times\pt+\pt\times X^D$ be the sum of the fiber divisors on $X^D\times X^D$, and similarly $F_p=X^D(p)\times \pt+\pt\times X^D(p)$.  Likewise, let $F_Z$ be the pullback of $F$ to $Z^D(p)$ via the quotient map $q:Z^D(p)\to X^D\times X^D$.  Note that for any map from a curve $B\into Z^D(p)$, $B.F_Z$ is simply the sum of the degrees of the two maps $B\into X^D$.
\begin{proposition}\label{degreegrows}  For any $k>0$, there is an $N>0$ such that any map from a smooth curve $B\into Z^D(p)$ that does not factor through a Hecke curve has $B\cdot F_Z 
> k$ as long as $p>N$.
\end{proposition}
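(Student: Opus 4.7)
The plan is to argue by contradiction. Assume there exist $k>0$ and a sequence of primes $p_n\to\infty$ each admitting a smooth non-Hecke curve $C_n$ and a map $C_n\to X^D(p_n)\times X^D(p_n)$ with $C_n\cdot F_{p_n}<k$. Replacing $C_n$ by the normalization of its image, we may assume $C_n\hookrightarrow X^D(p_n)^2$ is an embedded smooth curve of bidegree $(d_1,d_2)$ with $d_i<k$. The central geometric move is to push $C_n$ down to the base: let $q\colon X^D(p)\times X^D(p)\to X^D\times X^D$ denote the diagonal quotient by $G_p\times G_p$, Galois and \'etale off the CM locus, and set $C'_n:=q(C_n)\subset X^D\times X^D$ with bidegree $(d'_1,d'_2)$. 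Because $q$ is equivariant under the Hecke correspondences, $C'_n$ is non-Hecke.

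Two elementary degree computations relate these data. Factoring the projections to $X^D$ through $X^D(p_n)\to X^D$ (of degree $|G_{p_n}|$) yields $\deg(C_n\to C'_n)\cdot d'_i=d_i\cdot|G_{p_n}|$. On the other hand, $C_n$ is an irreducible component of the \'etale pullback $q^{-1}(C'_n)\to C'_n$ with Galois group $G_{p_n}\times G_{p_n}$, so $\deg(C_n\to C'_n)=|M_n|$, where $M_n\subset G_{p_n}\times G_{p_n}$ is the image of the monodromy representation $\pi_1(C'_n)\to G_{p_n}\times G_{p_n}$ of this cover. Combining,
\[d_i=d'_i\cdot\frac{|M_n|}{|G_{p_n}|},\qquad\text{and in particular}\qquad |M_n|\leq k\,|G_{p_n}|. \qquad(\ast)\]

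The contradiction is then supplied by the theorem of Andr\'e--Deligne \cite{andre}: for any non-Hecke $C'\subset X^D\times X^D$, the image of $\pi_1(C')$ in $\pi_1(X^D)\times\pi_1(X^D)=\Gamma^D\times\Gamma^D\subset\PGL_2(\R)\times\PGL_2(\R)$ has Zariski closure equal to the full $\PGL_2\times\PGL_2$. By strong approximation for $\mathrm{SL}_2$-type groups (Nori, Matthews--Vaserstein--Weisfeiler), a fixed Zariski-dense subgroup of $\Gamma^D\times\Gamma^D$ maps onto a subgroup of $G_p\times G_p=\PGL_2(\F_p)\times\PGL_2(\F_p)$ containing $\PSL_2(\F_p)\times\PSL_2(\F_p)$ for all sufficiently large $p$, and hence of order $\gtrsim|G_p|^2$. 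This is incompatible with $(\ast)$ once $|G_{p_n}|>k$, completing the argument modulo the uniformity issue below.

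The main technical obstacle is that strong approximation, as stated, fixes a single Zariski-dense subgroup, while $\pi_1(C'_n)$ varies with $n$. I would handle this by a Goursat-style analysis of $M_n$ together with Dickson's classification of subgroups of $\PGL_2(\F_p)$. If $M_n$ surjects onto both factors, then, using that for $p\geq 5$ the only quotients of $\PGL_2(\F_p)$ are trivial, order $2$, or the full group, $(\ast)$ forces $M_n$ to be the graph of an automorphism of $G_{p_n}$; this in turn says the two pulled-back level-$p_n$ covers $\pi_i^{\ast}X^D(p_n)$ of $C'_n$ are isomorphic, so the projections of $\pi_1(C'_n)$ to the two $G_{p_n}$-factors agree up to an automorphism---a configuration that cannot persist as $n\to\infty$, as it would force $(d'_1,d'_2)$ to be uniformly bounded (by $k$ via $(\ast)$) and so put the $C'_n$ in a bounded family where uniform strong approximation applies to give $|M_n|\gtrsim|G_{p_n}|^2$, contradicting $(\ast)$. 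The complementary case, in which some projection of $M_n$ is a proper subgroup of $\PGL_2(\F_{p_n})$, is handled analogously: such subgroups have order $O(p_n^2)$, which together with $(\ast)$ and Andr\'e--Deligne applied to the single-factor (Zariski-dense) projection of $\pi_1(C'_n)$ produces the contradiction via single-factor strong approximation.
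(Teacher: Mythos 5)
Your overall route is the same as the paper's: push $C$ down to $C'\subset X^D\times X^D$, derive the degree relation $d_i=d_i'\,|M_n|/|G_{p_n}|$ (your $(\ast)$, which agrees with the computation at the end of the paper's proof), and play Andr\'e--Deligne (Zariski density for non-Hecke curves) against Nori/strong approximation. The paper supplies the uniformity you worry about by its opening observation that maps of bounded bidegree to $X^D\times X^D$ have only finitely many possible images of $\pi_1$ up to conjugacy, applying Nori to each of these finitely many Zariski-dense subgroups; your Case A correctly recovers the one situation where that finiteness is legitimately available, since the graph case forces $|M_n|=|G_{p_n}|$ and hence $d_i'=d_i<k$, i.e. bounded bidegree for $C'_n$.

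The genuine gap is in your Case B. First, the group theory is misquoted: $\PSL_2(\F_p)\subset\PGL_2(\F_p)$ is a proper subgroup of order roughly $p^3/2$, so ``proper subgroups have order $O(p^2)$'' is false; that sub-case must be folded into the Goursat analysis of Case A (easy, but it is not what you wrote). Much more seriously, in the remaining case where some projection of $M_n$ really is small (Borel, dihedral or exceptional, of order $O(p_n^2)$), the relation $(\ast)$ yields no numerical contradiction at all: it only says $d_i'$ may be large, up to about $k|G_{p_n}|/|M_n|$. Correspondingly, the subgroup $\mathrm{pr}_1\bigl(\pi_1(C'_n)\bigr)\subset\Gamma^D$ to which you want to apply ``single-factor strong approximation'' has unbounded index as $n\to\infty$ (it is only constrained to contain the index $<k$ subgroup $\mathrm{pr}_1(\pi_1(C_n))$ of $\Gamma^D(p_n)$); its Zariski density is automatic (finite index in a lattice, no Andr\'e--Deligne needed), but Nori/MVW applies to a fixed, or at worst boundedly varying, finitely generated subgroup, and no such bound is available here. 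So the uniformity problem you correctly isolated simply resurfaces in Case B and is not resolved: ruling out, say, first-factor mod-$p$ monodromy inside a Borel requires an additional input beyond Dickson plus $(\ast)$ --- in the paper it is absorbed into the finiteness claim applied to $C'$ (i.e. the bounded-bidegree regime, essentially your Case A), and in \cite{BT2} it is handled by a different argument. As written, your Case B is circular: the contradiction is asserted via exactly the uniform strong approximation statement whose unavailability forced the case division in the first place.
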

\begin{remark}For the following we work in the category of orbifold curves.  $X^D$ is naturally an orbifold curve whose orbifold points are the abelian surfaces  with extra automorphisms.  $X^D(p)$ likewise is naturally an orbifold curve, but for $p\gg 0$ its orbifold structure is trivial. Note that in this language the map $\pi:X^D(p)\into X^D$ is \'etale.
\end{remark}
\begin{proof}
We first observe that for a fixed degree $d$, the image $f_*\pi_1(B)$ of the fundamental group under a map from a smooth orbifold curve $f:B\into X^D$ of degree $d$ only depends on  the ramification profile and the monodromy around the branch points.  If we further assume the orbifold points of $B$ lie over those of $X^D$, then there are finitely many choices for this data, and therefore only finitely many possible maps $f_*:\pi_1(B)\rightarrow\pi_1(X^D)$, up to conjugacy.  It follows that for maps from orbifold curves $B\into X^D\times X^D$ of bounded bidegree $B.F$ for which the orbifold points of $B$ map to those of $X^D\times X^D$, there are only finitely many possible images of the fundamental group, again up to conjugacy.

Now, given a $k>0$ as in the Proposition, take $B\into Z^D(p)$ a smooth curve with $B.F_Z<k$, and let $B'\subset X^D\times X^D$ be its projection, with map $\alpha:B\into B'$. Note that $B'$ is naturally an orbifold curve. Consider the image $\Pi\subset \pi_1(X^D\times X^D)=\Gamma^D\times \Gamma^D$ of the fundamental group of $B'$ in  that of $X^D\times X^D$.  Note that $\Gamma^D=G(\Z)$, where $G$ is the algebraic group defined so that for a ring $R$, $G(R)=(\O_D\otimes R)^*$. For a connected component $X^D(p)_0$ of $X^D(p)$ we have
\[\pi_1(X^D(p)_0)=\Gamma^D(p)=\ker(G(\Z)\into G(\F_p))\]

We can view the map $B'\into X^D\times X^D$ as a variation of Hodge structures, in which case a theorem of Andre-Deligne \cite[Theorem 1]{andre} implies that $\Pi$ is Zariski dense in the Mumford-Tate group $G\times G$ unless $B$ factors through a Hecke curve.  By a theorem of Nori \cite[Theorem 5.1]{nori}, every Zariski dense subgroup of $G$ surjects onto $G(\F_p)$ for $p\gg 0$. Since by the above argument the number of these subgroups is finite up to conjugacy, $N$ can be chosen large enough so that we may assume $\Pi$ surjects onto $G(\F_p)\times G(\F_p)$.

To finish, $\alpha$ has degree $|G(\F_p)|$ since the inverse image of $B'$ in $Z^D(p)$ is irreducible.  The composition $B\into Z^D(p)\into X^D$ on the one hand factors through $B'$ but on the other hand has bounded degree, and we have a contradiction for $p$ large enough.

\end{proof}

\begin{remark}

In the above proof, one can avoid discussing orbifold points by manually removing the finitely many points in $X^D$ corresponding to abelian surfaces with extra automorphisms.

\end{remark}

Note that this immediately allows us to conclude

\begin{corollary}\label{easygenus}  Suppose $X^D$ has genus $g(X^D)>1$.  Then for any $k>0$, there is an $N>0$ such that any map from a smooth curve $B\into Z^D(p)$ of genus $g(B)<k$ must factor through a Hecke curve, provided $p>N$.
\end{corollary}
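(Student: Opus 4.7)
The plan is to lift the map $V\to Z^D(p)$ to a curve $C$ in $X^D(p)\times X^D(p)$, apply Proposition \ref{degreegrows} to force a large bidegree of $C$ there, push this down to a large bidegree for (the image of) $V$ in $X^D\times X^D$, and derive a contradiction via Riemann--Hurwitz using the positivity of $2g(X^D)-2$.

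Concretely, suppose $f\colon V\to Z^D(p)$ is a non-constant map from a smooth curve of genus $g(V)<k$ that does not factor through any Hecke curve, and let $C$ be the normalization of an irreducible component of the fiber product $V\times_{Z^D(p)}(X^D(p)\times X^D(p))$.  Then $C$ is smooth and admits finite surjections $\alpha\colon C\to V$ of some degree $e\leq|G|$ and $\beta\colon C\to X^D(p)\times X^D(p)$.  Since $\alpha$ is surjective, $\beta(C)$ and $f(V)$ have the same image in $Z^D(p)$; in particular, if $\beta$ factored through a Hecke curve $T_M$, then $f$ would factor through $H_M$, contradicting our assumption.  Proposition \ref{degreegrows} therefore applies to $\beta$.

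Let $d_i^C$ denote the degree of the $i$-th projection $C\to X^D(p)$ and $d_i^V$ the degree of the composite $V\to Z^D(p)\xrightarrow{p_i} X^D$.  Writing $n=|G|=\deg(X^D(p)\to X^D)$, the $G$-equivariance of the projections yields two expressions for the degree of $C\to X^D$: namely $n\cdot d_i^C$ via $C\to X^D(p)\to X^D$, and $e\cdot d_i^V$ via $C\to V\to X^D$.  Thus $d_i^V=(n/e)d_i^C\geq d_i^C$, so $d_1^V+d_2^V\geq d_1^C+d_2^C=\beta^*F_p$.  By Proposition \ref{degreegrows}, for any $M_0$ we may choose $N$ so that $p>N$ forces $d_1^V+d_2^V\geq M_0$.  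On the other hand, since $g(X^D)>1$, Riemann--Hurwitz applied to each non-constant projection $V\to X^D$ yields
\[d_i^V\leq \frac{2g(V)-2}{2g(X^D)-2}<\frac{2k-2}{2g(X^D)-2},\]
so $d_1^V+d_2^V<2(2k-2)/(2g(X^D)-2)$.  Taking $M_0$ larger than this fixed bound produces the desired contradiction.

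The main subtlety is bookkeeping: one must verify that $C$ inherits the non-Hecke property from $V$, and that we are not in the degenerate situation where both $d_i^V=0$.  The latter would force $V$ to land in a fiber of some $p_i\colon Z^D(p)\to X^D$; such a fiber is a (possibly quotiented) copy of $X^D(p)$ whose genus tends to $\infty$ with $p$, so this case is immediately ruled out by $g(V)<k$ once $p$ is large enough.
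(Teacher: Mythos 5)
Your proposal is correct and follows essentially the same route as the paper: lift $V$ to a curve $C\subset X^D(p)\times X^D(p)$, invoke Proposition \ref{degreegrows} to force the bidegree of $C$ (hence of the projections $V\to X^D$) to grow with $p$, and contradict Riemann--Hurwitz using $g(X^D)>1$ and $g(V)<k$. The only differences are cosmetic — you bound the sum of the two projection degrees rather than the larger one, and you spell out the degenerate fiber case that the paper dismisses with ``clearly'' — so no further comment is needed.
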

\begin{proof}  One of the projections $B\into X^D$ has degree at least $\frac{1}{2}(B\cdot F_Z)$.  Now use Riemann--Hurwitz and the Proposition.
\end{proof}
\section{Proof of Main Theorem}\label{mainsect}
We now prove Theorem \ref{main}. For a non-Hecke curve $B\into Z^D(p)$, let $C$ be the normalization of a component of the preimage of $B$.  The key point is that Proposition \ref{degreegrows} bounds the genus of $C$ from below, while Corollary \ref{ramo} bounds the genus from above in an asymptotically smaller way.  

\begin{theorem}\label{bigthm}
For any $k>0$, there exists an $N>0$ such that any smooth curve $B\into Z^D(p)$ of genus $g(B)<k$ must factor through a Hecke curve, provided $p>N$.
\end{theorem}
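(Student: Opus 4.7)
The plan is to fix a non-Hecke curve $V \hookrightarrow Z^D(p)$ with $g(V) < k$, let $C$ be the normalization of a component of its preimage in $X^D(p)\times X^D(p)$, and bound $g(C)$ from both sides, deriving a contradiction for $p$ large. Note that $C$ also does not factor through a Hecke curve (else $V$ would), so Proposition \ref{degreegrows} and Corollary \ref{ramo} both apply; we may likewise assume $V$ is not contained in a fiber of either projection $Z^D(p) \to X^D$, as that case is trivial.

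For the lower bound, I would apply Riemann--Hurwitz to the two projections $\pi_i : C \to X^D(p)$. Writing $d_i = \deg(\pi_i) > 0$, we have $2g(C) - 2 \geq d_i(2g(X^D(p)) - 2)$ for $i = 1,2$, and summing with $d_1 + d_2 = C \cdot F_p$ gives
$$2g(C) - 2 \;\geq\; \tfrac{1}{2}(C \cdot F_p)\bigl(2g(X^D(p)) - 2\bigr).$$
The orbifold Riemann--Hurwitz formula for the \'etale orbifold cover $X^D(p) \to X^D$ shows $2g(X^D(p)) - 2 = |G(\F_p)| \cdot |\chi_{\mathrm{orb}}(X^D)|$, which grows linearly in $|G(\F_p)|$ since $X^D$ is hyperbolic.

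For the upper bound, the cover $\alpha : C \to V$ is \'etale away from preimages of fixed points of the diagonal $G$-action on $X^D(p) \times X^D(p)$, which by Section 2.4 are precisely the CM points. As stabilizers have order at most $6$, the ramification divisor satisfies $R \ll \mult_{\CM}(C)$, and Riemann--Hurwitz combined with Corollary \ref{ramo} and the identity $C \cdot K_{X^D(p)\times X^D(p)} = (C \cdot F_p)(2g(X^D(p)) - 2)$ gives
$$2g(C) - 2 \;\leq\; \deg(\alpha)(2g(V) - 2) + O(\mult_{\CM}(C)) \;\leq\; 2k|G(\F_p)| + o\bigl((C \cdot F_p)(2g(X^D(p)) - 2)\bigr),$$
using $\deg(\alpha) \leq |G(\F_p)|$ and $g(V) < k$.

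Combining, for any $\epsilon > 0$ and $p$ sufficiently large,
$$(C \cdot F_p)(2g(X^D(p)) - 2)\bigl(\tfrac{1}{2} - \epsilon\bigr) \;\leq\; 2k|G(\F_p)|.$$
Since $2g(X^D(p)) - 2 \gg |G(\F_p)|$, this forces $C \cdot F_p$ to be bounded by a constant depending only on $k$ and $X^D$---contradicting Proposition \ref{degreegrows} for $p$ large enough. The one delicate point I anticipate is verifying that the $o(\cdot)$ in Corollary \ref{ramo} is uniform in $C$; this should follow from inspecting the proofs of Propositions \ref{ramification} and \ref{ramificationminus}, whose implicit constants depend only on the auxiliary radius $R$ and the prime $p$, not on the curve. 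Modulo this uniformity, the main obstacle---controlling ramification of $\alpha$ at CM points---has already been overcome by the work of Sections 3--4, so the theorem is essentially a bookkeeping exercise comparing the two Riemann--Hurwitz estimates.
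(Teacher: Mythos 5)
Your proposal is correct and follows essentially the same route as the paper: a lower bound on $g(C)$ from Riemann--Hurwitz applied to the projections $C\into X^D(p)$ (made effective by Proposition \ref{degreegrows}), an upper bound from Riemann--Hurwitz applied to $\alpha:C\into V$ with ramification supported at CM points and controlled by Corollary \ref{ramo}, and a comparison using $K=(2g(X^D(p))-2)F_p$ and $g(X^D(p))\sim|G(\F_p)|$. The uniformity of the $o(\cdot)$ in $C$ that you flag is indeed what the paper's Propositions \ref{ramification} and \ref{ramificationminus} provide, since their constants depend only on $R$, $d$ and $p$.
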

\begin{proof}  Suppose $B$ does not factor through a Hecke divisor.  Let $C\into X^D(p)\times X^D(p)$ be the normalization of a connected component of the preimage of $B$ in $X^D(p)\times X^D(p)$, and let $\alpha:C\into B$ be the map to $B$.  Fix a connected component $X^D(p)_0$ of $X^D(p)$, and identify all other components with $X^D(p)_0$.  $C$ then lands in a connected component identified with $X^D(p)_0\times X^D(p)_0$.  Note that
\[K_{X^D(p)_0\times X^D(p)_0}=(2g(X^D(p)_0)-2)F_{X^D(p)_0\times X^D(p)_0}\]
where $F_{X^D(p)_0\times X^D(p)_0}$ is the sum of the fibers, \emph{i.e.} $X^D(p)_0\times \pt$ and its flip. 
If $\pi:C\into X^D(p)_0$ is the projection onto a factor with largest degree, then on the one hand Riemann-Hurwitz applied to $\pi$ yields
\begin{equation}\label{first}g(C)\geq \frac{1}{2}(C\cdot F_{X^D(p)_0\times X^D(p)_0})(g(X^D(p)_0)-1))=\frac{1}{4}(C\cdot K_{X^D(p)_0\times X^D(p)_0})\end{equation}
On the other hand, by Corollary \ref{ramo}, as $p\into \infty$,
\[
\mult_{\CM}(C)=o(C\cdot K_{X^D(p)_0\times X^D(p)_0})\]
Thus, applying Riemann-Hurwitz to $\alpha$,
\begin{align}
g(C)&\leq 1+(\deg \alpha)(g(B)-1)+\frac{1}{2}\mult_{\CM}(C)\notag\\
&\leq 1+(\deg \alpha)(g(B)-1)+o(C\cdot K_{X^D(p)_0\times X^D(p)_0})\label{second}
\end{align}
By Proposition \ref{degreegrows} we know $(C\cdot K_{X^D(p)_0^2})/\deg\alpha=B\cdot F_Z\into\infty$ as $p\into \infty$, so after dividing equations \eqref{first} and \eqref{second} by $\deg\alpha$, for $p$ large enough we obtain the result.
\end{proof}
\begin{remark}\label{gonality}As mentioned in the introduction, the techniques of \cite{BT2} can be used to show that the constant $N$ in the theorem can be taken to only depend on the gonality of $B$.  Briefly, if $B$ is $d$-gonal, then we obtain a map $\P^1\into \Sym^d Z(p)$.  By proving repulsion results akin to those of Section 3 for the diagonals in $(X^D(p)\times X^D(p))^d$ (see \cite[Proposition 18]{BT2}), we obtain multiplicity estimates for the pull back $C$ of $B$ to $(X^D(p)\times X^D(p))^d$ along those diagonals (see \cite[Proposition 30]{BT2}).  The map $C\into B$ ramifies only when $C$ passes through diagonals or $\CM$ points, and by an argument similar to the proof above (see also \cite[Proposition 31]{BT2}) it follows that $\Sym^d Z(p)$ has no rational curves for large enough $p$.
\end{remark}
\begin{remark}For each fixed (large enough) $p$, it is easy to deduce the same result over $\bar{\F}_\ell$ for sufficiently large $\ell$ by a standard argument.
\end{remark}
\bibliography{biblio}
\bibliographystyle{alpha}
\end{document}